\newtheorem{thm}{Theorem}[section]
\newtheorem{lema}[thm]{Lemma}
\theoremstyle{remark}
\newtheorem{rem}[thm]{Remark}
\numberwithin{equation}{section}
\newcommand{\R}{\mathbb R}
\newcommand{\N}{\mathbb N}
\newcommand{\C}{\mathcal{C}}
\newcommand{\ve}{\varepsilon}
\newcommand{\lam}{\lambda}
\begin{document}
\title{Homogenization of {F}u{\v{c}}{\'{\i}}k eigenvalues by optimal partition methods}

\author[A M Salort]{Ariel M. Salort }
\address{Departamento de Matem\'atica
 \hfill\break \indent FCEN - Universidad de Buenos Aires and
 \hfill\break \indent   IMAS - CONICET.
\hfill\break \indent Ciudad Universitaria, Pabell\'on I \hfill\break \indent   (1428)
Av. Cantilo s/n. \hfill\break \indent Buenos Aires, Argentina.}
\email[A.M. Salort]{asalort@dm.uba.ar}

%35P15  	Estimation of eigenvalues, upper and lower bounds
%35P30  	Nonlinear eigenvalue problems, nonlinear spectral theory
%35B27  	Homogenization; equations in media with periodic structure
\subjclass[2010]{35B27, 35P15, 35P30, 34A08}

\keywords{Eigenvalue homogenization, nonlinear eigenvalues, order of convergence, $p-$laplacian}

 \begin{abstract}
Given a bounded domain  $\Omega$ in $\mathbb{R}^N$, $N\geq 1$ we study the asymptotic behavior as $\varepsilon \to 0$ of the eigencurves of 
$$
  -\Delta_p u_\varepsilon=\alpha_\varepsilon m(\tfrac{x}{\varepsilon})(u_\varepsilon^+ )^{p-1} - \beta_\varepsilon n(\tfrac{x}{\varepsilon})(u_\varepsilon^- )^{p-1} \quad \textrm{ in } \Omega
$$
with  Dirichlet  boundary conditions, where $m$ and $n$ are bounded periodic weights.  In this work we  obtain   accurate bounds of the convergence rates of these curves to some limit curves as $\ve \to 0$.
 \end{abstract}

\maketitle
%**********************************************************************************************

\section{Introduction}

Given a bounded domain  $\Omega$  in $\R^N$, $N\geq 1$ we study the asymptotic behavior as $\varepsilon \to 0$ of the spectrum of the following asymmetric elliptic problem
\begin{align} \label{P1}
	\bigg\{
	\begin{array}{ll}
	-\Delta_p u_\varepsilon=\alpha_\varepsilon m_\varepsilon(u_\varepsilon^+ )^{p-1} - \beta_\varepsilon n_\varepsilon(u_\varepsilon^- )^{p-1} &\quad \textrm{ in } \Omega\\
	u_\ve=0 &\quad \mbox{on }\partial \Omega.
	\end{array}
\end{align}

Here, $\Delta_p u:=div(|\nabla u|^{p-2}\nabla u)$ denotes the $p-$Laplace operator  with $1<p<\infty$ and, as usual, $u^\pm:=\max\{\pm u, 0\}$. The parameters $\alpha_\varepsilon$ and $\beta_\varepsilon$ are real numbers  depending on $\varepsilon>0$.
Here the family of functions $m_\varepsilon$ and $n_\varepsilon$ are given in terms of $Q-$periodic functions, $Q$ being the unit cube in $R^N$, in the form $m_\ve(x)=m(x/\ve)$ and $n_\ve(x)=n(x/\ve)$. The functions $m$ and $n$  are assumed to be positive and uniformly bounded away from zero and infinity, that is, there are constants $\theta_-$, $\theta_+$ such that
\begin{equation} \label{cotas}
0<\theta_- \leq m(x), n(x) \leq \theta_+ < +\infty.
\end{equation}

It is well-known that as $\varepsilon \to 0$,
\begin{equation} \label{limi}
  m_\varepsilon(x)\rightharpoonup \bar m=\fint_Q m(x)\, dx, \quad
  n_\varepsilon(x)\rightharpoonup \bar n=\fint_Q n(x)\, dx  \quad\textrm{ weakly* in }L^\infty(\Omega).
\end{equation}

Problem \eqref{P1} was widely studied for a fixed value of $\ve>0$: see for instance Arias and Campos \cite{AR1}, Drabek \cite{DRA}, Reichel and Walter \cite{RE1}, Rynne and Walter \cite{RYN},  for positive weights; Alif and Gossez \cite{ALIF1}, Leadi and Marcos \cite{LIAM} for   indefinite weights.

For a fixed $\ve>0$, the Fu\v c\'\i k spectrum of \eqref{P1} is defined as the set
$$
	\Sigma_\ve=\Sigma_\ve(m_\ve,n_\ve):=\{(\alpha_\ve ,\beta_\ve) \in \R^2\colon (\ref{P1}) \textrm{ has a nontrivial solution} \}.
$$
Moreover, we say that a nontrivial function $u_\ve\in W^{1,p}_0(\Omega)$ is an eigenfunction of \eqref{P1} associated to $(\alpha_\ve,\beta_\ve)\in \R^+\times \R^+$ if it satisfies the weak formulation
\begin{equation} \label{debil}
	\int_\Omega |\nabla u|^{p-2} \nabla u \cdot \nabla v \, dx = \int_\Omega (\alpha m_\ve(x) (u_\ve^+)^{p-1} v - \beta_\ve n(x) (u_\ve^-)^{p-1} v)\,dx
\end{equation}
for all $v\in W^{1,p}_0(\Omega)$.

Observe that when both weights are the same, let us say, $r_\ve$, and both parameters are equal, let us say, $\lam_\ve$, equation (\ref{P1}) becomes the weighted $p-$laplacian eigenvalue problem with Dirichlet boundary conditions, i.e., 
\begin{align} \label{Plap}
\begin{cases}
-\Delta_p u_\ve= \lam_\ve r_\ve |u_\ve|^{p-2}u_\ve  &\quad \textrm{in } \Omega\\
u_\ve=0 &\quad \mbox{on }\partial \Omega.
\end{cases}
\end{align}

One immediately observe that $\Sigma_\ve$ contains the trivial lines $\lam_{1}(m_\ve)\times \R$ and $\R \times \lam_{1}(n_\ve)$, being $\lam_1(r_\ve)$ the first eigenvalue of \eqref{Plap}.
In contrast with the one-dimensional case, where a full description of the spectrum is obtained, when $N>1$ it is only known the existence
of a curve $\mathcal{C}_\ve$ beyond the trivial lines, see \cite{AR1,AR3}. Such curve can be written by considering its intersection with the line of slope $s\in \R^+$ passing through the origin in $\R^2$ as
\begin{align} \label{def_ce}
	\C_{\varepsilon}=\mathcal{C}_{\ve}(m_\varepsilon,n_\varepsilon):=\{(\alpha_\varepsilon(s),\beta_\varepsilon(s)), s\in\R^+\}.
\end{align}
The authors in \cite{AR3} deal with a variational characterization for $\alpha(s)$ and $\beta(s)$.
 
When $\varepsilon \to 0$   the following natural limit problem for (\ref{P1}) is obtained
\begin{equation} \label{pron1limite.gral}
	\bigg\{
	\begin{array}{ll}
	-\Delta_p u=\alpha_0 \bar m (u^+ )^{p-1} - \beta_0 \bar m (u^- )^{p-1} &\quad \textrm{ in } \Omega \\[0.05 cm]
	u=0 &\quad \textrm{ on } \partial \Omega
	\end{array}
\end{equation}
where $\bar m$ and $\bar n$ are given in (\ref{limi}), and whose  corresponding first nontrivial curve is denoted  by
$$
	\mathcal{C}_0=\mathcal{C}_0(m_0,n_0):=\{(\alpha_0(s),\beta_0(s)), s\in\R^+\}.
$$

In this context, in the previous work \cite{SA-fucik} it was stated the convergence of $\C_\ve$ to the limit curve $\C_0$ (even for non-periodic weights) in the sense that
$$
	\alpha_\varepsilon(s)\to \alpha_0(s) \quad \mbox{and} \quad \beta_\varepsilon(s)\to \beta_0(s)
$$ 
as $\ve \to 0$, for each fixed $s\in\R^+$. Moreover,  by using the variational characterization of $\C_\ve$ and $\C_0$ provided by \cite{AR3}, it was established the convergence rates of the curves:

\begin{thm}[Theorem 4.2, \cite{SA-fucik}]\label{teo_1.viejo}
Given $\ve>0$ and $s\in \R^+$, let $(\alpha_\ve(s),\beta_\ve(s))\in \C_\ve$ and $(\alpha_0(s),\beta_0(s))\in \C_0$. Then the following estimates hold
\begin{align} \label{estim1}
	|\alpha_\varepsilon(s) - \alpha_0(s)|\leq 
	 \begin{cases}
	 c\ve s &\quad s\geq 1\\
	 c\ve s^{-2} &\quad s< 1,
	 \end{cases}
	\quad
	|\beta_\varepsilon(s) - \beta_0(s)| \leq
	 \begin{cases}
	 c\ve s^2 &\quad s\geq 1\\
	 c\ve s^{-1} &\quad s< 1
	 \end{cases}
\end{align}
where $c$ is a computable constant independent on $\ve$ and $s$.
 
\end{thm}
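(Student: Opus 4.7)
The proof is based on the mountain-pass variational characterization of $\C_\ve$ and $\C_0$ provided in \cite{AR3}: for each $s>0$,
$$
\alpha_\ve(s) \;=\; \inf_{\gamma\in\Gamma_\ve(s)} \max_{u\in\gamma([-1,1])} \int_\Omega |\nabla u|^p\, dx,
$$
the infimum being taken over a suitable class $\Gamma_\ve(s)$ of continuous paths lying on the constraint manifold
$$
\mathcal{M}_\ve(s)=\Big\{ u\in W^{1,p}_0(\Omega): \int_\Omega m_\ve (u^+)^p\,dx + s\int_\Omega n_\ve (u^-)^p\,dx = 1\Big\},
$$
with $\beta_\ve(s)=s\alpha_\ve(s)$, and an analogous formula for $\alpha_0(s)$ obtained by replacing $(m_\ve,n_\ve)$ with $(\bar m,\bar n)$.

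The central ingredient is the standard homogenization inequality: since $m-\bar m$ is $Q$-periodic with zero mean on $Q$, one can write $m-\bar m=\mathrm{div}\,G$ with $G\in L^\infty(\R^N)^N$ periodic, so that $m_\ve-\bar m=\ve\,\mathrm{div}[G(\cdot/\ve)]$. Integrating against $w=(u^\pm)^p\in W^{1,1}_0(\Omega)$ and applying H\"older's inequality gives
$$
\Big|\int_\Omega (m_\ve-\bar m)(u^\pm)^p\,dx\Big|\;\leq\; C\ve\,\|u^\pm\|_p^{p-1}\,\|\nabla u^\pm\|_p,
$$
and likewise for $n_\ve-\bar n$. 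I would then, given $\delta>0$ and a $\delta$-near optimal path $\gamma_0\in\Gamma_0(s)$ for $\alpha_0(s)$, transport it onto $\mathcal{M}_\ve(s)$ by the radial rescaling $\gamma_\ve(u)=t(u)\,u$, with $t(u)>0$ chosen to enforce the $\ve$-constraint. The above estimate then yields $t(u)^p=1+O(\ve\,\rho(s))$ uniformly along $\gamma_0$, and hence
$$
\alpha_\ve(s)\;\leq\; t^p(\alpha_0(s)+\delta)\;\leq\;\alpha_0(s)+C\ve\,\rho(s)\alpha_0(s)+\delta.
$$
Sending $\delta\to0$ and then reversing the roles of $\ve$ and $0$ gives the two-sided bound.

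It remains to book-keep the $s$-dependence. From the constraint defining $\mathcal M_0(s)$ and the lower bound \eqref{cotas}, any $u\in\mathcal{M}_0(s)$ satisfies $\|u^+\|_p\leq C$ and $\|u^-\|_p\leq Cs^{-1/p}$, while $\|\nabla u\|_p\leq(\alpha_0(s)+\delta)^{1/p}$ along a near-optimal path. Combining these with the asymptotic behavior $\alpha_0(s)=O(1)$ as $s\to\infty$ (horizontal asymptote at $\lambda_1(\bar m)$) yields the first estimate of \eqref{estim1} for $s\geq1$, and the bound on $|\beta_\ve(s)-\beta_0(s)|$ follows immediately from the identity $\beta_\ve(s)-\beta_0(s)=s(\alpha_\ve(s)-\alpha_0(s))$. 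For $s<1$ the most economical route is the duality obtained by $u\mapsto-u$ and swapping $(m,n)\leftrightarrow(n,m)$, which sends the point on the line of slope $s$ for $(m,n)$ to the point on the line of slope $1/s$ for $(n,m)$, thereby reducing the case $s<1$ to the case $s\geq1$ already handled. The main difficulty I anticipate is preserving uniform control of the test paths as $s$ tends to the trivial asymptotes of $\C_0$, where the weight balance degenerates; the symmetry above bypasses one of the two regimes and concentrates the analytic work into a single case.
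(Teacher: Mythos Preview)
This theorem is not proved in the present paper: it is quoted from the author's earlier work \cite{SA-fucik} (see the attribution ``Theorem~4.2, \cite{SA-fucik}''), and the whole point of the current paper is precisely to sharpen \eqref{estim1} into Theorem~\ref{main1} by abandoning the mountain--pass description in favor of the optimal-partition characterization of Theorem~\ref{teocurva}. So there is no proof here to compare your proposal against.

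That said, your outline is exactly the strategy the paper attributes to \cite{SA-fucik} (``by using the variational characterization of $\C_\ve$ and $\C_0$ provided by \cite{AR3}''), and the ingredients you list --- the periodic corrector $m-\bar m=\mathrm{div}\,G$, the radial projection between the constraint manifolds $\mathcal M_\ve(s)$ and $\mathcal M_0(s)$, the a priori bound $\|\nabla u\|_p^p\le \alpha_0(s)+\delta$ along a near-optimal path, and the $(m,n)\leftrightarrow(n,m)$ symmetry to reduce $s<1$ to $s\ge1$ --- are the right ones. Two small cautions. First, the rescaled path $t(\cdot)\gamma_0(\cdot)$ must remain admissible for $\Gamma_\ve(s)$; since admissibility in \cite{AR3} is a sign condition on the endpoints and $t(u)>0$, this is automatic, but it should be said. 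Second, your own bookkeeping is actually \emph{sharper} than \eqref{estim1}: using $\|u^-\|_p\le Cs^{-1/p}$ (which you note) in the oscillation estimate for the $n$-term yields $|t^{-p}-1|\le C\ve(1+s^{1/p})\alpha_0(s)^{1/p}$ and hence $|\alpha_\ve-\alpha_0|\le C\ve s^{1/p}$ for $s\ge1$, which is already the improved exponent of Theorem~\ref{main1} rather than the cruder $c\ve s$ in \eqref{estim1}. The exponent $s$ in \eqref{estim1} comes from not exploiting this decay; if you are aiming at \eqref{estim1} literally, a coarser bound $\|u^-\|_p\le C$ suffices.
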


Nevertheless, since  estimates \eqref{estim1} do not depend on $p$, we suspect that Theorem \ref{teo_1.viejo} does not turn to be  enough accurate. 

Our first aim in this paper is to refine \eqref{estim1} by using an alternative characterization of the curves. By following the arguments of  \cite{terra} it is possible to define $\mathcal{C}_\ve$ by minimizing the first eigenvalue of weighted $p-$Laplacian problems over all possible partition of the kind 
$$
	\mathcal{P}=\{\{\omega_+,\omega_-\}\subset \Omega \, : \, \omega_\pm \mbox{ is open and connected, }\omega_+\cap \omega_-=\emptyset\},
$$ 
see Theorem \ref{teocurva} in Section \ref{sec.fucik.rn} for the precise statement. Such optimal partition characterization   reduces our analysis to studying the  homogenization rates of the first eigenvalue of the weighted $p-$laplacian.

Our first result reads as follows.

\begin{thm}  \label{main1}
Given $\ve>0$ and $s\in \R^+$, let $(\alpha_\ve(s),\beta_\ve(s))\in \C_\ve$ and $(\alpha_0(s),\beta_0(s))\in \C_0$. Then the following estimates hold
\begin{align*}
	|\alpha_\varepsilon(s) - \alpha_0(s)|\leq 
	 \begin{cases}
	 C\ve s^\frac{1}{p} &\quad s\geq 1\\
	 C\ve s^{-1-\tfrac{1}{p}} &\quad s< 1
	 \end{cases}
	\qquad
	|\beta_\varepsilon(s) - \beta_0(s)| \leq
	 \begin{cases}
	 C\ve s^{1+\frac{1}{p}} &\quad s\geq 1\\
	 C\ve s^{-\frac{1}{p}} &\quad s< 1
	 \end{cases}
\end{align*}
where $C$ is a constant independent on $\ve$ and $s$. 
\end{thm}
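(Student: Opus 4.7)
The argument combines two ingredients: the optimal partition characterization of the Fu\v c\'ik curve provided by Theorem \ref{teocurva}, and a sharp quantitative homogenization rate for the first Dirichlet eigenvalue of the weighted $p$-Laplacian that is uniform with respect to the subdomain. From the partition formula, points on $\C_\ve$ and $\C_0$ parametrized along the line $\beta=s\alpha$ admit a representation of the form
\[
\alpha_\ve(s)=\inf_{(\omega_+,\omega_-)\in\mathcal{P}}\max\bigl\{\lambda_1^\ve(\omega_+,m_\ve),\,s^{-1}\lambda_1^\ve(\omega_-,n_\ve)\bigr\},
\]
and similarly for $\alpha_0(s)$ with the averaged weights $\bar m,\bar n$, where $\lambda_1^\ve(\omega,r)$ denotes the first eigenvalue of $-\Delta_p$ on $\omega$ with weight $r$. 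Thus bounding $|\alpha_\ve(s)-\alpha_0(s)|$ reduces to controlling the homogenization rate of $\lambda_1$ on arbitrary subdomains of $\Omega$.

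The key lemma I would prove is
\[
|\lambda_1^\ve(\omega,r_\ve)-\lambda_1(\omega,\bar r)|\le C\ve\,\lambda_1(\omega,\bar r)^{1+1/p}
\]
for every admissible $\omega\subset\Omega$ and periodic weight $r$ satisfying \eqref{cotas}, with $C=C(p,\theta_\pm)$ independent of $\omega$. The proof is a Rayleigh quotient comparison: since $r-\bar r$ is periodic with zero mean over $Q$, it can be written as $\operatorname{div}V$ for a bounded periodic vector field $V$, and hence $r_\ve-\bar r=\ve\operatorname{div}(V(\cdot/\ve))$. Integration by parts combined with H\"older's inequality gives, for any $u\in W_0^{1,p}(\omega)$,
\[
\Bigl|\int_\omega (r_\ve-\bar r)|u|^p\,dx\Bigr|\le C\ve\,\|u\|_{L^p(\omega)}^{p-1}\|\nabla u\|_{L^p(\omega)}.
\]
Plugging a normalized first eigenfunction of one problem as test function in the Rayleigh quotient of the other, and invoking the identity $\|\nabla u\|_p^p=\lambda_1\bar r\|u\|_p^p$ (and its counterpart for $\lambda_1^\ve$), yields the rate in both directions.

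With the partition formula and the lemma at hand, pick a (nearly) optimal partition $(\omega_+^0,\omega_-^0)$ for $\alpha_0(s)$, so that $\lambda_1(\omega_+^0,\bar m)=\alpha_0(s)$ and $\lambda_1(\omega_-^0,\bar n)=s\alpha_0(s)$. Using it as a test partition for $\alpha_\ve(s)$ and applying the lemma yields
\[
\alpha_\ve(s)-\alpha_0(s)\le C\ve\,\alpha_0(s)^{1+1/p}\max\{1,s^{1/p}\};
\]
the reverse inequality follows symmetrically from an optimal $\ve$-partition. To recover the claimed $s$-dependence, I would use that $\C_0$ lies beyond both trivial lines $\alpha=\lambda_1(\bar m)$ and $\beta=\lambda_1(\bar n)$ and stays close to them at the extremes, yielding $\alpha_0(s)\le C$ for $s\ge 1$ and $\alpha_0(s)\le Cs^{-1}$ for $s<1$. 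Substituting these produces the announced bounds on $|\alpha_\ve(s)-\alpha_0(s)|$; the bounds on $|\beta_\ve-\beta_0|$ then follow from $\beta_\ve-\beta_0=s(\alpha_\ve-\alpha_0)$.

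The main technical hurdle is the \emph{uniformity in $\omega$} of the eigenvalue estimate: the partition is arbitrary and may degenerate, so any argument relying on compactness or regularity of $\partial\omega$ would be ineffective. The divergence-form trick $r_\ve-\bar r=\ve\operatorname{div}(V_\ve)$ sidesteps this by producing an estimate in purely Sobolev terms, at the cost of the factor $\lambda_1^{1+1/p}$, which is precisely the source of the $p$-dependence in the final rates.
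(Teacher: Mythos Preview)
Your proposal is correct and follows essentially the same route as the paper: the optimal-partition representation of Theorem~\ref{teocurva}, the domain-uniform homogenization rate $|\lambda_1(r_\ve,\omega)-\lambda_1(\bar r,\omega)|\le C\ve\,\mu_1(\omega)^{1+1/p}$ (your version with $\lambda_1(\omega,\bar r)^{1+1/p}$ on the right is equivalent up to $\theta_\pm$-constants, and your divergence-form sketch is exactly how it is proved), and the a priori bound $\alpha_0(s)\le C\gamma(s)$ from Lemma~\ref{lema.2}. The only cosmetic difference is that the paper works with $c(s)=\beta(s)$ rather than $\alpha(s)$ and packages the identity $\lambda_1(\bar m,\omega_+^0)=\alpha_0(s)$, $\lambda_1(\bar n,\omega_-^0)=\beta_0(s)$ as Lemma~\ref{lema.3}; your more direct use of that identity (valid because the optimal partition is realized by nodal domains of an eigenfunction, as stated in Theorem~\ref{teocurva}) yields the same bound in slightly fewer steps.
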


\begin{rem}
	A careful computation allow us to compute explicitly the constant in Theorem \ref{main1} as 
	\begin{equation} \label{cte}
		C=\left(\frac{\theta_+}{\theta_-}\right)^{1+\frac{1}{p}}\mu_2(\Omega)^{1+\frac{1}{p}} \max\{C_m,C_n\}
	\end{equation}
	where $\mu_2$ is the second eigenvalue of the Dirichlet $p-$laplacian in $\Omega$ and
\begin{equation}  \label{ctee}
		C_r= p\frac{\sqrt{N}}{2} \|r-\bar r\|_{L^\infty(\R^N)} \theta_+ (\theta_-)^{-\frac{1}{p}-2}.
\end{equation}
 
\end{rem}

In the second part of the work we deal with the homogenization of the one-dimensional version of \eqref{P1}, i.e., 
	\begin{align} \label{P1.1D}
	\bigg\{
	\begin{array}{ll}
	-\Delta_p u_\varepsilon=\alpha_\varepsilon m_\varepsilon(u_\varepsilon^+ )^{p-1} - \beta_\varepsilon n_\varepsilon(u_\varepsilon^- )^{p-1} &\quad \textrm{ in } (a,b)\subset \R\\
	u_\ve(a)=u_\ve(b)=0.
	\end{array}
\end{align}

Problem \eqref{P1.1D} was introduced in the '70s by Dancer and {F}u\v c\'\i k (see \cite{DAN, FUCiK-libro}) for constant weights and a fixed value of $\ve>0$. These authors were interested in problems with jumping nonlinearities, and  obtained that the nontrivial solutions consist in a  family of hyperbolic-like curves.

The existence of similar curves in the spectrum was proved later for non-constant weights by Rynne in \cite{RYN}, together with several properties about simplicity of zeros. The asymptotic behavior of the curves was studied in \cite{PiS}. For sign-changing weights, similar results were obtained by Alif and Gossez, see \cite{ALIF1}.

The main advantage with regard to the higher dimensional case is the fact of knowing the structure of the whole spectrum and  precise information about the curves. By means of shooting arguments Rynne proved that the spectrum of \eqref{P1.1D} can be described as an union of curves
$$
	\Sigma_\ve(m_\ve,n_\ve):=  \bigcup_{k\in \N_0} \mathcal{C}_{k,\ve}.
$$
Here, $\C_{0,\ve}=\C_{0,\ve}^+ \cup \C_{0,\ve}^-$ are the trivial lines, which are given by
$\lam_{1}(m_\ve)\times \R$, and  $\R \times \lam_{1}(n_\ve)$,  respectively, being $\lam_1(r_\ve)$ the first eigenvalue of the Dirichlet $p-$laplacian with weight $r_\ve$. These curve are characterized for having eigenfunctions which do not change signs.
The remaining curves are made as the union $\mathcal{C}_{k,\ve}=\mathcal{C}_{k,\ve}^+ \cup \mathcal{C}_{k,\ve}^-$, where $\mathcal{C}_{k,\ve}^+$ (resp. $\mathcal{C}_{k,\ve}^-$) it is composed of pairs  whose corresponding eigenfunctions have $k$ internal zeros, and positive (resp. negative) slope at $x=a$. 

As $\ve\to 0$, the following natural limit problem for \eqref{P1.1D}  is obtained
\begin{align}  \label{P1.1D.lim}
\bigg\{
\begin{array}{ll}
-\Delta_p u=\alpha_0 \bar m(u^+ )^{p-1} - \beta_0 \bar n(u^- )^{p-1} &\quad \textrm{ in } (a,b)\\
u(a)=u(b)=0,
\end{array}
\end{align}
where  $\bar m$ and $\bar n$ are given in (\ref{limi}). Similarly, its corresponding spectrum is composed as the union
$$
	\Sigma_0(n_0,n_0):= \bigcup_{k\in \N_0} \mathcal{C}_{k,0} ,
$$
with curves $\mathcal{C}_{k,0} $ satisfying  analogous properties to $\mathcal{C}_{k,\ve} $.

In order to describe the curves in the spectrum of $\Sigma_\ve$ and $\Sigma_0$ we denote $(\alpha_{k,\ve}(s),\beta_{k,\ve}(s))$ and  $(\alpha_{k,0}(s),\beta_{k,0}(s))$ the intersection of the curves $\mathcal{C}_{k,\ve}$ and $\mathcal{C}_{k,0}$ with  the line of slope $s$ passing through the origin, respectively.

\medskip

Under these considerations, in \cite{FBPS-fucik} it was studied  the   behavior of the  eigencurves of \eqref{P1.1D} as $\ve$ approaches zero. It was proved that for each $k\in \N_0$, the curve $\C_{k,\ve}$ converges to  $\C_{k,0}$ in the sense that
\begin{equation} \label{convv}
	\alpha_{k,\ve}(s)\to \alpha_{k,0}(s) \quad \mbox{and} \quad \beta_{k,\varepsilon}(s)\to \beta_{k,0}(s)
\end{equation}
as $\ve \to 0$, for each fixed $s\in\R^+$.

Again, following  \cite{terra} it is possible to obtain a representation of the curves $\mathcal{C}_{k,\ve}$ and $\mathcal{C}_{k,0}$ by minimizing the first eigenvalue of weighted $p-$laplacian problems over all possible partition of the kind 
$$
	\mathcal{P}_{k+1}\, : \, \{a=t_0<t_1<\ldots<t_{k+1}=b\},
$$ 
see Theorem \ref{teo_part_1d} in Section \ref{sec.1d} for the precise statement. Such characterization allow us to prove the following result concerning to the convergence rates of \eqref{convv}:

\begin{thm} \label{teo-1d}
Given $\ve>0$ and $s\in \R^+$, let $(\alpha_\ve(s),\beta_\ve(s))\in \C_{k,\ve}$ and $(\alpha_0(s),\beta_0(s))\in \C_{k,0}$. Then the following estimates hold
\begin{align*}
	|\alpha_\varepsilon(s) - \alpha_0(s)|\leq 
	 \begin{cases}
	 C\ve k^{p+1}s^\frac{1}{p} &\, s\geq 1\\
	 C\ve k^{p+1} s^{-1-\tfrac{1}{p}} &\, s< 1,
	 \end{cases}
	\,\,
	|\beta_\varepsilon(s) - \beta_0(s)| \leq
	 \begin{cases}
	 C\ve k^{p+1} s^{1+\frac{1}{p}} &\, s\geq 1\\
	 C\ve k^{p+1} s^{-\frac{1}{p}} &\, s< 1
	 \end{cases}
\end{align*}
where $(\alpha_0(s),\beta_0(s))\in \mathcal{C}_{k,0}$ and
$$
	C=\left(\frac{\theta_+}{\theta_-}\right)^{1+\frac{1}{p}} \left(\frac{\pi_p}{b-a} \right)^{1+p} \max\{C_m,C_n\},
$$
being $C_m$ and $C_n$ given in  \eqref{ctee}.
\end{thm}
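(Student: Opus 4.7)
The plan is to follow the strategy of Theorem \ref{main1} but lifted to the one-dimensional optimal-partition characterization provided by Theorem \ref{teo_part_1d}. That result reduces the curve $\mathcal C_{k,\ve}$ to a min-max problem over partitions $\mathcal P_{k+1}:\{a=t_0<t_1<\ldots<t_{k+1}=b\}$ with alternating sign convention: writing the value of this min-max as $\Lambda_{k,\ve}(s)$, both $\alpha_{k,\ve}(s)$ and $\beta_{k,\ve}(s)=s\,\alpha_{k,\ve}(s)$ are recovered from $\Lambda_{k,\ve}(s)$ by explicit $s$-depending multiples. Everything therefore reduces to estimating $|\Lambda_{k,\ve}(s)-\Lambda_{k,0}(s)|$.

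The first step is to fix a partition $\mathcal P^\ve_{k+1}=\{I_i^\ve\}_{i=1}^{k+1}$ attaining the min-max for $\Lambda_{k,\ve}(s)$ and use it as a competitor for $\Lambda_{k,0}(s)$, together with the symmetric argument using an optimal partition for the limit problem. A standard competitor argument then yields
$$
|\Lambda_{k,\ve}(s)-\Lambda_{k,0}(s)|\;\leq\;\max_i \bigl|\lam_1(I_i;r_\ve)-\lam_1(I_i;\bar r)\bigr|,
$$
where $r$ stands for $m$ or $n$ according to the sign convention on the piece $I_i$. The problem is thus reduced to a subinterval-by-subinterval homogenization rate for the Dirichlet weighted $p$-Laplacian eigenvalue, which is precisely the ingredient that fuelled the proof of Theorem \ref{main1}.

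The second step is the single-interval eigenvalue estimate, which should read
$$
|\lam_1(I;r_\ve)-\lam_1(I;\bar r)|\;\leq\;C_r\,\ve\, \lam_1(I;\bar r)^{1+1/p},
$$
with the same $C_r$ as in \eqref{ctee}. The gain compared with the $N$-dimensional case is that in one dimension we have the explicit upper bound $\lam_1(I;\bar r)\leq (\theta_-)^{-1}(\pi_p/|I|)^p$, and by using the uniform partition of $(a,b)$ into $k+1$ equal pieces as a competitor one obtains $\Lambda_{k,\ve}(s)\lesssim (\theta_-)^{-1}(\pi_p(k+1)/(b-a))^p$; since in the optimal partition the min-max equality forces $\lam_1(I_i^\ve;\bar r)\lesssim \Lambda_{k,\ve}(s)$, raising to the $(1+1/p)$-th power contributes the factor $(\pi_p(k+1)/(b-a))^{p+1}$, which is exactly the source of the $k^{p+1}$ growth and of the prefactor $(\pi_p/(b-a))^{1+p}$ in the statement.

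Finally, translating the $\Lambda$-estimate into separate estimates for $\alpha_{k,\ve}(s)$ and $\beta_{k,\ve}(s)$ requires carrying the scaling in $s$. For $s\geq 1$ the $m_\ve$-piece carries the larger coefficient and the rate acquires the extra factor $s^{1/p}$ for $\alpha_{k,\ve}$ (and hence $s^{1+1/p}$ for $\beta_{k,\ve}=s\alpha_{k,\ve}$); for $s<1$ the roles of $m$ and $n$ are swapped (equivalently, apply the transformation $s\mapsto 1/s$), producing the symmetric $s^{-1-1/p}$ and $s^{-1/p}$ factors. The constant $(\theta_+/\theta_-)^{1+1/p}\max\{C_m,C_n\}$ then assembles the weight bounds with the single-interval eigenvalue constants in perfect parallel with \eqref{cte}. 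I expect the main obstacle to be the uniform-in-$I$ verification of the single-subinterval homogenization estimate $|\lam_1(I;r_\ve)-\lam_1(I;\bar r)|\leq C_r\ve\,\lam_1(I;\bar r)^{1+1/p}$, so that no hidden dependence on the length $|I|$ creeps in and spoils the $k^{p+1}$ growth, together with the careful $s$-bookkeeping needed to recover \emph{both} rates and the exact prefactor constant simultaneously.
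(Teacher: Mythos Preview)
Your plan is essentially the paper's own proof: use Theorem~\ref{teo_part_1d} to write $\beta_{k,\ve}(s)=c_{k+1,\ve}(s)$ and $\alpha_{k,\ve}(s)=s^{-1}c_{k+1,\ve}(s)$, plug the optimal partition for one problem as a competitor in the other, apply the single-interval homogenization bound of Theorem~\ref{teo_1}, and control $\mu_1(I_i)$ on the optimal pieces via the uniform partition (which in the paper is packaged as Lemmas~\ref{lema.4.3}--\ref{1d.lema.3}) to produce the $(k+1)^{p+1}(\pi_p/(b-a))^{p+1}$ factor.

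Two small points to tighten. First, your displayed competitor inequality
\[
|\Lambda_{k,\ve}(s)-\Lambda_{k,0}(s)|\leq \max_i |\lam_1(I_i;r_\ve)-\lam_1(I_i;\bar r)|
\]
is not literally correct: the odd-indexed ($m$-)pieces carry the coefficient $s$ inside the max, so the right-hand side should read $\max_i\{\,s|\lam_1(I_{2i+1};m_\ve)-\lam_1(I_{2i+1};\bar m)|,\ |\lam_1(I_{2i+2};n_\ve)-\lam_1(I_{2i+2};\bar n)|\,\}$; likewise your uniform-partition bound on $\Lambda_{k,\ve}(s)$ needs a factor $\max\{s,1\}$. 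You clearly anticipate this in your last paragraph, but the displayed formulas should already reflect it. Second, the single-interval estimate from Theorem~\ref{teo_1} is stated with $\mu_1(I)^{1+1/p}$, not with $\lam_1(I;\bar r)^{1+1/p}$; using the latter would contaminate the constant by an extra factor of $\bar r^{-(1+1/p)}$ and you would not recover exactly the $C$ in the statement. With those two corrections your argument coincides with the paper's.
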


\medskip

Observe that when we specialize Theorem \ref{teo-1d} with both weight functions  being the same $1-$periodic function $r(\tfrac{x}{\ve})$, and both parameters being the same, that is, $\lam_\ve=\alpha_\ve=\beta_\ve$, it follows that   $s=1$, and we recover the homogenization rates for the eigenvalue convergence of 
\begin{equation} \label{er1}
	-\Delta_p u_\ve = \lam_\ve r_\ve |u_\ve|^{p-2}u_\ve \quad \mbox{in }(a,b), \qquad u_\ve(a)=u_\ve(b)=0,
\end{equation}
to the limit problem
\begin{equation} \label{er2}
-\Delta_p u = \lam_0 \bar r |u|^{p-2}u \quad \mbox{in }(a,b), \qquad u(a)=u(b)=0
\end{equation}
as $\ve\to 0$, which has been widely studied, see for instance \cite{FBPS1, FBPS2, FBPS3}. More precisely, in particular Theorem \ref{teo-1d} states that 
\begin{equation*}
	|\lam_{k}(r_\ve)-\lam_{k}(\bar r)|\leq ck^{p+1}\ve
\end{equation*}
where $\lam_{k}(r_\ve)$ is $k-$th eigenvalue of \eqref{er1}, $\lam_{k}(\bar r)$ is the $k-$th eigenvalue of \eqref{er2}, and  $c$  is a constant independent on $k$ and $\ve$.

\medskip 

The paper is organized as follows: in Section \ref{sec.fucik.rn} we state some properties concerning to the first nontrivial curve in the Fu{\v{c}}{\'{\i}}k spectrum for $N\geq 1$; in Section \ref{seccion.part} we deal with the proof of Theorem \ref{main1}; in Section \ref{sec.1d} we study the one-dimensional Fu{\v{c}}{\'{\i}}k eigencurves; finally in Section \ref{sect1d.p} we provide a proof for Theorem \ref{teo-1d}.

 \section{The Fu{\v{c}}{\'{\i}}k spectrum in $\R^N$} \label{sec.fucik.rn}

As we pointed in the introduction, given $\Omega\subset\R^N$, $N\geq 1$, and functions $m$ and $n$ satisfying \eqref{cotas}, the structure of the spectrum $\Sigma(m,n)$ of the following asymmetric equation
\begin{align} \label{P10}
\bigg\{
\begin{array}{ll}
-\Delta_p u=\alpha m(u^+ )^{p-1} - \beta n(u^- )^{p-1} &\quad \textrm{ in } \Omega\\
u=0 &\quad \mbox{on }\partial \Omega.
\end{array}
\end{align}
is not completely understood, even in the constant weight case. Immediately one can check that $\Sigma(m,n)$ contains the lines $\lam_1(m)\times \R$ and $\R \times \lam_1(n)$. Here, given a function $r$ satisfying \eqref{cotas}, $\lam_1(r)$ denotes the first eigenvalue of 
\begin{align} \label{p.lap}
\begin{cases}
	-\Delta_p u = \lam  r  |u|^{p-2}u  &\quad \textrm{in } \Omega\\
	u=0 &\quad \mbox{on }\partial \Omega.
\end{cases}
\end{align}
The first eigenvalue of \eqref{p.lap} can be written variationally  by minimizing the following quotient over all the non-zero functions belonging to $W^{1,p}_0(\Omega)$
\begin{equation} \label{variac}
	\lam_1(r)=\inf \frac{\int_\Omega |\nabla u|^p \, dx}{\int_\Omega r(x)|u|^p\, dx}.
\end{equation}

When $r\equiv 1$ we just write $\mu_1$ to denote \eqref{variac}. When it is precise to empathize the dependence on the domain we will write $\lam_k(r,\Omega)$ and $\mu_k(\Omega)$ to denote the $k-$th variational eigenvalue of \eqref{p.lap}.

 In \cite{AR1,AR3} it was shown the existence of a first variational nontrivial curve $\C_1(m,n)$ given by minimizing the Rayleigh quotient associated to \eqref{P10} along a family of sign-changing paths.
More precisely, the authors in \cite{AR3} proved that
$$
	C_1(m,n)=\{(\alpha(s),\beta(s)), s\in \R^+\}
$$
where $(\alpha(s),\beta(s))$ is the intersection between $\Sigma(m,n)$ and the line of slope $s$ passing through the origin in $\R^2$. Each component is given by
$$
	\alpha(s)=c(m,sn), \qquad \beta(s)=s\alpha(s)
$$
where
$$
	c(m,n)=\inf_{\gamma \in \Gamma} \max_{u\in \gamma[-1,1]} \frac{\int_\Omega |\nabla u|^p\, dx}{\int_\Omega (m (u^+)^p+n (u^-)^p)\, dx}
$$
and $\Gamma:=\{\gamma \in C([-1,1])\colon \gamma(-1)\geq 0 \mbox{ and } \gamma(1)\leq 0 \}$.

Later on, problem \eqref{P10} was considered for the case $p=2$ and constant weights in \cite{terra}. In that paper the authors, among other things, obtain an alternative representation of $\C_1$ in the framework of optimal partitions, see Theorem 1.2 in \cite{terra}. However, as the they notice (in Remark 2.2, \cite{terra}) the procedure leading to the proof of such result can be trivially adapted for any $p\geq 2$ and by considering weights. Therefore, the result corresponding to \eqref{P10} can be stated as follows.

\begin{thm} \label{teocurva}
The first nontrivial curve $\C_1(m,n)$
in the spectrum of \eqref{P10} can be written as 
$$\C(m,n)=\{(\alpha(s),\beta(s)), s\in \R^+\},\qquad \mbox{ with } \quad  	\alpha(s)=s^{-1}c(s), \quad \beta(s)=c(s)$$
where
\begin{equation} \label{el_c}
	c(s):=\inf_{(\omega_i)\in \mathcal{P}_2} \max\{s\lam_{1}(m,\omega_+),\lam_{1}(n,\omega_-)\}
\end{equation}
and
$$
	\mathcal{P}_2=\{(\omega_+,\omega_-\}\subset \Omega \, : \, \omega_i \mbox{ is open and connected, }\omega_+\cap \omega_-=\emptyset\}.
$$ 
Moreover, for every $s>0$ there exists $u\in W^{1,p}_0(\Omega)$ such that $(\{u^+>0\},\{u^->0\})$ achieves $c(s)$.
\end{thm}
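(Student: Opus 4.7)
The plan is to establish both inequalities $c(s)\le\beta(s)$ and $\beta(s)\le c(s)$; the claimed identities $\alpha(s)=s^{-1}c(s)$ and $\beta(s)=c(s)$ will then follow from $\beta(s)=s\alpha(s)$ recalled from \cite{AR3}. Throughout I write $R(u)=\int_\Omega|\nabla u|^p\,dx\big/\int_\Omega(m(u^+)^p+sn(u^-)^p)\,dx$, so that $\alpha(s)=c(m,sn)=\inf_{\gamma\in\Gamma}\max_{u\in\gamma[-1,1]}R(u)$.

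For the upper bound $c(s)\le\beta(s)$ I would use the variational minimizer produced in \cite{AR3}: there is a sign-changing $u\in W^{1,p}_0(\Omega)$ achieving $\alpha(s)$. Because $(\alpha(s),\beta(s))$ lies on the \emph{first} nontrivial Fu\v c\'\i k curve, $u$ has exactly two nodal domains, both open and connected, say $\omega_\pm=\{\pm u>0\}\in\mathcal{P}_2$. On $\omega_+$ the restriction $u^+$ is a strictly positive weak solution of $-\Delta_p v=\alpha(s)\,m\,v^{p-1}$ vanishing on $\partial\omega_+$, hence a first Dirichlet eigenfunction with weight $m$, so $\lambda_{1}(m,\omega_+)=\alpha(s)$; analogously $\lambda_{1}(n,\omega_-)=\beta(s)$. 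Since $\beta(s)=s\alpha(s)$, the two arguments of the $\max$ in \eqref{el_c} coincide and both equal $\beta(s)$, which gives $c(s)\le\beta(s)$ and simultaneously realizes the attainment claim in the theorem with this $u$.

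For the reverse bound $\beta(s)\le c(s)$ I mimic the construction of \cite{terra}. Fix an arbitrary $(\omega_+,\omega_-)\in\mathcal{P}_2$ and let $\phi_\pm\in W^{1,p}_0(\omega_\pm)$, extended by zero to $\Omega$, be positive first eigenfunctions normalized by $\int_{\omega_+}m\phi_+^p=\int_{\omega_-}n\phi_-^p=1$; write $\lambda_\pm=\lambda_{1}(m,\omega_+),\lambda_{1}(n,\omega_-)$. Define the linear path
\[
\gamma(t)=\tfrac{1-t}{2}\phi_+-\tfrac{1+t}{2}\phi_-,\qquad t\in[-1,1],
\]
which belongs to $\Gamma$ because $\gamma(-1)=\phi_+\ge 0$ and $\gamma(1)=-\phi_-\le 0$. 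The disjointness $\omega_+\cap\omega_-=\emptyset$ decouples every integral and yields
\[
R(\gamma(t))=\frac{(1-t)^p\lambda_++(1+t)^p\lambda_-}{(1-t)^p+s(1+t)^p}.
\]
A direct differentiation shows that $\frac{d}{dt}R(\gamma(t))$ has the sign of $\lambda_--s\lambda_+$ on $(-1,1)$, so $R(\gamma(t))$ is monotone and its maximum is attained at an endpoint: $\max_t R(\gamma(t))=\max\{\lambda_+,\lambda_-/s\}$. Hence $\alpha(s)\le\max\{\lambda_+,\lambda_-/s\}$; multiplying by $s$ and taking the infimum over $\mathcal{P}_2$ gives $\beta(s)\le c(s)$.

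The main technical input is the nodal-domain analysis of Step~1: one needs existence of a variational minimizer for $c(m,sn)$ together with the two-nodal-domain property and the fact that $u^\pm$ is a first eigenfunction of the weighted $p$-Laplacian on the corresponding nodal piece, all of which are known for the first nontrivial variational level through the results of \cite{AR1,AR3}. Once those facts are in hand, everything else is an elementary one-parameter optimization along the explicit path $\gamma$, exactly as in the model case $p=2$ treated in \cite{terra}; this is why the adaptation to general $p$ and to non-constant weights is essentially routine.
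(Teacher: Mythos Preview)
The paper does not actually supply a proof of this theorem: it is quoted from \cite{terra} (Theorem~1.2 there), with the observation that the argument carries over verbatim to general $p$ and to bounded weights (Remark~2.2 in \cite{terra}). Your sketch is precisely a reconstruction of that argument in the present setting, and it is correct. The two halves --- testing $c(s)$ with the nodal partition of an eigenfunction on $\mathcal C_1$ to get $c(s)\le\beta(s)$, and testing the mountain-pass level $c(m,sn)$ with the linear path built from the first eigenfunctions on an arbitrary pair $(\omega_+,\omega_-)$ to get $\beta(s)\le c(s)$ --- are exactly the steps in \cite{terra}, and your computation of $R(\gamma(t))$ and its monotonicity is accurate.

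The only point worth flagging is that in Step~1 you invoke that a minimizer on the first nontrivial curve has \emph{exactly two} nodal domains and that each of $\{u>0\}$, $\{u<0\}$ is connected; this is indeed the key structural input and is available from \cite{AR1,AR3} (together with the standard fact that a positive solution on a nodal domain must be the first Dirichlet eigenfunction there), but it is the place where the argument is not purely formal. Once that is granted, the rest is the elementary one-parameter optimization you wrote, and there is no discrepancy with the approach the paper has in mind.
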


In order to prove our main result we state some properties concerning to the curve $\C_1$. First, we establish bounds for points belonging to $\C_1$ in terms of the parameter $s$ and the auxiliary function $\gamma:\R^+ \to \R^+$ defined as
	\begin{equation} \label{gama}
		\gamma(s)=
		\bigg\{
		\begin{array}{ll}
		1  &\mbox{ if }\ s \geq 1 \\
		s^{-1}  &\mbox{ if }\ s < 1.
		\end{array}
	\end{equation}

\begin{lema}[Lemma 3.1, \cite{SA-fucik}] \label{lema.2}
	Given $s\in \R^+$, let $(\alpha(s),\beta(s))\in \mathcal C_1(m,n)$. Then
	$$
		\alpha(s) \leq \theta_-^{-1} \mu_2(\Omega) \gamma(s), \qquad \beta(s) \leq \theta_-^{-1} \mu_2(\Omega) s \gamma(s)
	$$
	where $\gamma$ is defined in \eqref{gama} and $\mu_2$ denotes the second eigenvalue of the $p-$laplacian  in $\Omega$   with Dirichlet boundary conditions.
\end{lema}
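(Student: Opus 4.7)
The strategy is to exhibit an explicit admissible partition in the infimum defining $c(s)$ in \eqref{el_c} and compare the resulting weighted first eigenvalues with the unweighted second eigenvalue $\mu_2(\Omega)$. Let $\phi_2 \in W_0^{1,p}(\Omega)$ be an eigenfunction of the unweighted Dirichlet $p$-Laplacian associated to $\mu_2(\Omega)$, and pick $\omega_+$ a connected component of the open set $\{\phi_2>0\}$ and $\omega_-$ a connected component of $\{\phi_2<0\}$. Both are open, connected, disjoint and nonempty (the latter because $\mu_2 > \mu_1$ forces $\phi_2$ to change sign), so $(\omega_+,\omega_-) \in \mathcal{P}_2$ is an admissible competitor.

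The key observation is that the restriction $\phi_2|_{\omega_\pm}$ is a first Dirichlet eigenfunction of the unweighted $p$-Laplacian on $\omega_\pm$, so $\mu_1(\omega_\pm) = \mu_2(\Omega)$. Combining this with the lower bound $m,n \geq \theta_-$ in the variational characterization \eqref{variac} yields
$$
\lambda_1(m,\omega_+) \leq \theta_-^{-1}\mu_1(\omega_+) = \theta_-^{-1}\mu_2(\Omega),
$$
and analogously for $\lambda_1(n,\omega_-)$. Plugging this pair into the infimum \eqref{el_c} gives
$$
c(s) \leq \max\{s\lambda_1(m,\omega_+),\lambda_1(n,\omega_-)\} \leq \theta_-^{-1}\mu_2(\Omega)\max\{s,1\}.
$$

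It remains to rewrite the right-hand side using $\gamma$. A direct case split shows $\max\{s,1\} = s\gamma(s)$ for both $s \geq 1$ (where $\gamma(s)=1$) and $s<1$ (where $\gamma(s) = s^{-1}$). The identities $\beta(s) = c(s)$ and $\alpha(s) = s^{-1}c(s)$ from Theorem \ref{teocurva} then immediately produce the two claimed inequalities. There is no genuine obstacle here: the only delicate point is ensuring that the nodal components $\omega_\pm$ meet the structural requirements of $\mathcal{P}_2$, which is automatic from the choice of connected components together with the sign-changing property of any second eigenfunction.
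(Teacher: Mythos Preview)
Your argument is correct. Note that the present paper does not actually prove this lemma---it is quoted from \cite{SA-fucik}---so there is no in-paper proof to compare against directly. The closest analogue proved here is the one-dimensional version, Lemma~\ref{lema.21d}, and the route taken there is organized a bit differently: the paper first bounds $c_{k+1}(1)$ by $\theta_-^{-1}\mu_{k+1}(\Omega)$ via an explicit (uniform) partition (Lemma~\ref{lema.4.3}), and then invokes the monotonicity of the curve $\mathcal{C}_k$ to pass from $s=1$ to general $s$. Your argument bypasses the monotonicity step by bounding $c(s)$ for all $s$ at once, obtaining $c(s)\le \theta_-^{-1}\mu_2(\Omega)\max\{s,1\}$ directly from the single test partition. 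In the $N$-dimensional setting this is arguably the more natural route, since the optimal-partition description (Theorem~\ref{teocurva}) is precisely the tool this paper advertises; by contrast, the proof in \cite{SA-fucik} was written before that characterization and relies instead on the path min-max description of $c(m,n)$ from \cite{AR3}.

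One small remark: for the bound you only need $\mu_1(\omega_\pm)\le\mu_2(\Omega)$, which follows immediately from using $\phi_2|_{\omega_\pm}$ as a test function in \eqref{variac}; the stronger equality $\mu_1(\omega_\pm)=\mu_2(\Omega)$ is true but requires the (standard) fact that a sign-definite eigenfunction must correspond to the first eigenvalue, and is not needed here.
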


In the following lemma we consider the first eigenvalue of the $p-$laplacian on nodal domains of eigenfunctions corresponding to points belonging to $\C_1$.

\begin{lema} \label{lema.3}
	Given $s\in \R^+$, let $(\alpha(s),\beta(s))\in \C_1(m,n)$ and let $u$ be a corresponding eigenfunction. If we denote $\omega_\pm=supp(u^\pm)$, then
	$$
		\mu_1(\omega_+) \leq C\gamma(s), \qquad   \mu_1(\omega_-)\leq Cs\gamma(s)
	$$
	where $C=\frac{\theta_+}{\theta_-}\mu_2(\Omega)$ and $\gamma(s)$ is given in \eqref{gama}.
\end{lema}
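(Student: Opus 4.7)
The plan is to exploit the fact that the positive and negative parts of an eigenfunction $u$ associated to a point on $\C_1$ are themselves positive eigenfunctions of the weighted $p$-Laplacian on their nodal domains, and then compare weighted and unweighted eigenvalues using the bounds \eqref{cotas}.

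First I would observe that, by the weak formulation \eqref{debil} tested against $v\in W^{1,p}_0(\omega_+)$ (extended by zero outside $\omega_+$), the function $u^+$ satisfies
\begin{equation*}
-\Delta_p u^+ = \alpha(s)\, m(x)\, (u^+)^{p-1} \quad \text{in } \omega_+, \qquad u^+ = 0 \ \text{on } \partial \omega_+.
\end{equation*}
Since Theorem \ref{teocurva} guarantees that $\omega_+$ is open and connected, and $u^+>0$ in $\omega_+$, the function $u^+$ is a positive eigenfunction of the weighted $p$-Laplacian on $\omega_+$. By the standard uniqueness (up to scalar) of the first eigenfunction as the only one of constant sign, this forces $\alpha(s)=\lam_1(m,\omega_+)$. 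The same reasoning gives $\beta(s)=\lam_1(n,\omega_-)$.

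Next, I would compare $\mu_1(\omega_\pm)$ with $\lam_1(m,\omega_+)$ and $\lam_1(n,\omega_-)$ through the variational formula \eqref{variac}. Using that $m\le\theta_+$ pointwise, for every $v\in W^{1,p}_0(\omega_+)\setminus\{0\}$,
\begin{equation*}
\frac{\int_{\omega_+}|\nabla v|^p\,dx}{\int_{\omega_+}|v|^p\,dx} \le \theta_+ \frac{\int_{\omega_+}|\nabla v|^p\,dx}{\int_{\omega_+} m |v|^p\,dx}.
\end{equation*}
Taking infima yields $\mu_1(\omega_+)\le \theta_+\, \lam_1(m,\omega_+) = \theta_+\, \alpha(s)$, and analogously $\mu_1(\omega_-)\le \theta_+\, \beta(s)$.

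Finally, I would invoke Lemma \ref{lema.2} to bound $\alpha(s) \le \theta_-^{-1}\mu_2(\Omega)\gamma(s)$ and $\beta(s)\le \theta_-^{-1}\mu_2(\Omega)s\gamma(s)$, which chained with the inequalities above gives
\begin{equation*}
\mu_1(\omega_+)\le \frac{\theta_+}{\theta_-}\mu_2(\Omega)\gamma(s), \qquad \mu_1(\omega_-)\le \frac{\theta_+}{\theta_-}\mu_2(\Omega)\,s\gamma(s),
\end{equation*}
which is exactly the claim with $C=\tfrac{\theta_+}{\theta_-}\mu_2(\Omega)$. No step is really an obstacle; the only point that deserves care is justifying that $u^{\pm}$ indeed realizes $\lambda_1$ on the nodal set (which uses connectedness of $\omega_\pm$ from Theorem \ref{teocurva} plus the sign of $u^\pm$), after which everything reduces to plugging in Lemma \ref{lema.2} and the weight bounds \eqref{cotas}.
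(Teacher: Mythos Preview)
Your proof is correct and follows essentially the same route as the paper: identify $\alpha(s)=\lam_1(m,\omega_+)$ (and $\beta(s)=\lam_1(n,\omega_-)$) via the weak formulation, compare $\mu_1$ with the weighted first eigenvalue through the pointwise bounds \eqref{cotas}, and conclude with Lemma \ref{lema.2}. If anything, you are a bit more careful than the paper in justifying why $\alpha$ is the \emph{first} eigenvalue on $\omega_+$ (via positivity and connectedness), whereas the paper simply asserts it after testing with $v=u^+$.
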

\begin{proof}
	By taking $v=u^+$ in the weak formulation of \eqref{P10} we obtain that
	\begin{align} \label{ef1}
		  \int_{\omega_+} |\nabla u^+|^p \, dx &= \int_{\Omega} |\nabla u^+|^p \, dx  =\alpha \int_{\Omega} m |u^+|^p\, dx = \alpha\int_{\omega_+} m |u^+|^p\, dx,
	\end{align}
	from where it follows that $\alpha=\lam_1(m,\omega_+)$ and $u|_{\omega_+}\in W^{1,p}_0(\omega_+)$ is an eigenfunction associated  to $\lam_1(m,\omega_+)$.
	Since
	$$
		\frac{1}{\theta_+}\frac{\int_{\omega_+} |\nabla v|^p}{\int_{\omega^+} |v|^p}\leq  \frac{\int_{\omega_+} |\nabla v|^p}{\int_{\omega_+}m |v|^p}  \leq \frac{1}{\theta_-}\frac{\int_{\omega_+} |\nabla v|^p}{\int_{\omega_+} |v|^p}
	$$
	for all $v \in W^{1,p}_0(\omega_+)$, from \eqref{variac} it follows that
	$$
		\tfrac{1}{\theta_+}\mu_1(\omega_+ )  \leq \lam_1(m,\omega_+) \leq \tfrac{1}{\theta_-}\mu_1(\omega_+ ),
	$$
	and the desired inequality follows by using Lemma \ref{lema.2}.  Analogously,  by using $u^-$ as a test function in the weak formulation of \eqref{P10}  the another inequality is obtained.

\end{proof}

\section{proof of the results for $N\geq 1$}  \label{seccion.part}

Before proving our main result, we state an auxiliary results concerning to the homogenization of eigenvalues of the weighted $p$-laplacian.

Given a bounded domain $\Omega\subset \R^N$ and a function $r$ satisfying \eqref{cotas}, we denote $\lam_{1}(r_\ve)$ the first eigenvalue of
  \begin{align}  \label{Plap1.ve}
  	\begin{cases}
  	-\Delta_p u_\ve= \lam_\ve r(\tfrac{x}{\ve}) |u_\ve|^{p-2}u_\ve  &\quad \textrm{in } \Omega\\
  	u_\ve=0 &\quad \mbox{on }\partial \Omega.
  	\end{cases}
  \end{align}
When an explicit emphasis on the domain is required, we denote $\lam_{1}(r_\ve,\Omega)$ the first eigenvalue of \eqref{Plap1.ve}; additionally, when $r_\ve\equiv 1$ we  write $\mu_{1}(\Omega)$ instead of $\lam_{1}(1,\Omega)$.

As we pointed in the introduction, convergence rates in the homogenization of the {F}u\v c\'\i k spectrum are closely related with the convergence rates in the homogenization of eigenvalues of the $p-$laplacian. Given a $Q-$periodic function $r$ satisfying \eqref{cotas}, $Q$ being the unit cube in $\R^N$, as $\ve\to 0$ the following limit problem for \eqref{Plap1.ve} is obtained
\begin{align} \label{Plap1.lim}
	\begin{cases}
	-\Delta_p u_0= \lam_{0} \bar r |u_0|^{p-2}u_0  &\quad \textrm{in } \Omega\\
	u_0=0 &\quad \mbox{on }\partial \Omega,
	\end{cases}
\end{align}
where $\bar r$ is the average of $r$ over $Q$. The eigenvalue $\lam_1(r_\ve)$ converges to the first eigenvalue of \eqref{Plap1.lim}. Furthermore, the rate  of the convergence of $\lam_{1}(r_\ve)$  is stated in the following result.

\begin{thm}[Theorem 2.2, \cite{SA-fucik}]\label{teo_1}
	Given a $Q-$periodic function $r$ satisfying \eqref{cotas}, let us denote $\lam_{1}(r_\ve)$ and $\lam_{1}(\bar r)$ the first eigenvalue  of equations \eqref{Plap1.ve} and \eqref{Plap1.lim}, respectively. Then
	$$|\lam_{1}(r_\ve,\Omega)-\lam_{1}(\bar r,\Omega)|\leq C_r \mu_1(\Omega)^{\frac{1}{p}+1} \varepsilon$$
	with $C_r$ given by
	$$
		C_r= p\frac{\sqrt{N}}{2} \|r-\bar r\|_{L^\infty(\R^N)} \theta_+ (\theta_-)^{-\frac{1}{p}-2}.
	$$
\end{thm}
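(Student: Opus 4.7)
The plan is to exploit the variational characterization
$$\lam_1(r) = \inf_{0\neq v \in W^{1,p}_0(\Omega)} \frac{\int_\Omega |\nabla v|^p\,dx}{\int_\Omega r\,|v|^p\,dx}$$
together with an integration-by-parts argument that converts the fast oscillation of $r_\ve - \bar r$ into an explicit power of $\ve$. The pivotal analytic ingredient is the existence of a $Q$-periodic vector field $R \in L^\infty(\R^N;\R^N)$ with $\mathrm{div}\,R = r - \bar r$ and $\|R\|_{L^\infty} \leq \tfrac{\sqrt{N}}{2}\|r-\bar r\|_{L^\infty}$, obtained by iterating along each coordinate direction the one-dimensional antiderivative of a mean-zero periodic function. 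After rescaling, this identity reads $r_\ve - \bar r = \ve\,\mathrm{div}_x[R(x/\ve)]$, so that for every $u \in W^{1,p}_0(\Omega)$ an integration by parts followed by H\"older's inequality yields the oscillation bound
$$\left|\int_\Omega (r_\ve - \bar r)|u|^p\,dx\right| \leq p\ve\,\tfrac{\sqrt{N}}{2}\|r-\bar r\|_{L^\infty}\|u\|_{L^p}^{p-1}\|\nabla u\|_{L^p}.$$

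Next I carry out a two-sided Rayleigh comparison. I normalize a first eigenfunction $u_0$ of \eqref{Plap1.lim} so that $\int_\Omega \bar r\,|u_0|^p = 1$; this forces $\|\nabla u_0\|_{L^p} = \lam_1(\bar r)^{1/p}$ and, since $\bar r \geq \theta_-$, $\|u_0\|_{L^p} \leq \theta_-^{-1/p}$. Inserting $u_0$ into the Rayleigh quotient of \eqref{Plap1.ve} produces
$$\lam_1(r_\ve) \leq \frac{\lam_1(\bar r)}{1 + \mathcal E_\ve},\qquad \mathcal E_\ve := \int_\Omega (r_\ve - \bar r)|u_0|^p\,dx,$$
so that one side of $|\lam_1(r_\ve)-\lam_1(\bar r)|$ is controlled by $\lam_1(r_\ve)\,|\mathcal E_\ve|/(1+\mathcal E_\ve)$, and the oscillation bound above estimates $|\mathcal E_\ve|$ by $p\ve\,\tfrac{\sqrt{N}}{2}\|r-\bar r\|_{L^\infty}\theta_-^{-(p-1)/p}\lam_1(\bar r)^{1/p}$. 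The reverse inequality is obtained symmetrically, with a first eigenfunction $u_\ve$ of \eqref{Plap1.ve} normalized by $\int_\Omega r_\ve|u_\ve|^p = 1$ used as a test function in the Rayleigh quotient for $\bar r$; the ratio $\theta_+/\theta_-$ appearing in the final constant emerges when one compares $\int_\Omega \bar r\,|u_\ve|^p$ with $\int_\Omega r_\ve|u_\ve|^p = 1$ through the trivial bounds $\theta_-\leq \bar r,\, r_\ve \leq \theta_+$.

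Finally, the a priori estimate $\lam_1(r_\ve),\lam_1(\bar r) \leq \mu_1(\Omega)/\theta_-$ (immediate from $r,\bar r \geq \theta_-$ and the variational quotient) converts the remaining factor $\lam_1^{1/p}$ into $\mu_1(\Omega)^{1/p}$, so that the two inequalities assemble into the claimed $C_r\,\mu_1(\Omega)^{1+1/p}\ve$ estimate. The step I expect to be least routine is the explicit construction of the periodic primitive $R$ with the specific constant $\sqrt{N}/2$: the iterative coordinate-by-coordinate scheme is classical, but tracking the optimal constant at each stage requires some bookkeeping. The remainder of the argument amounts to a controlled iteration of H\"older's inequality and Rayleigh comparison.
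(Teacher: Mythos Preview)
The paper does not supply its own proof of this statement: Theorem~\ref{teo_1} is quoted verbatim as Theorem~2.2 of the earlier work \cite{SA-fucik} and is used here only as a black box in the proofs of Theorems~\ref{main1} and~\ref{teo-1d}. There is therefore no in-paper argument to compare your proposal against.

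That said, your outline is the standard route to this kind of periodic homogenization rate and is essentially correct: build a bounded $Q$--periodic vector field $R$ with $\mathrm{div}\,R=r-\bar r$, use $r_\ve-\bar r=\ve\,\mathrm{div}_x R(x/\ve)$ and one integration by parts to gain the factor $\ve$, then run a two-sided Rayleigh comparison with the respective first eigenfunctions. Two minor remarks. First, your constant bookkeeping does not land exactly on the stated $C_r=p\tfrac{\sqrt N}{2}\|r-\bar r\|_{L^\infty}\theta_+\theta_-^{-1/p-2}$; if instead of dividing by $1+\mathcal E_\ve$ you use the cleaner inequality $\lam_1(r_\ve)-\lam_1(\bar r)\le \lam_1(r_\ve)\,|\mathcal E_\ve|$ (and symmetrically), you arrive at a constant of the same shape without the $\theta_+$. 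For the purposes of the present paper this is irrelevant, since only the structure $C\,\mu_1(\Omega)^{1+1/p}\ve$ is used downstream. Second, the bound $\|R\|_{L^\infty}\le\tfrac{\sqrt N}{2}\|r-\bar r\|_{L^\infty}$ via the iterated coordinate scheme is not entirely automatic: at step $j$ the residual $\fint_{x_1,\dots,x_{j-1}} r-\fint_{x_1,\dots,x_j} r$ is not a priori controlled by $\|r-\bar r\|_{L^\infty}$, so obtaining precisely $\sqrt N/2$ (rather than some dimensional constant) needs a little extra care. You already flagged this as the nonroutine step, so there is no conceptual gap.
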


We are ready to prove our main result in this section.

\begin{proof}[Proof of Theorem \ref{main1}]
According to Theorem \ref{teocurva} the curve $\mathcal{C}_\ve$ associated to  \eqref{P1} is given by 
$$
	\mathcal{C}_\varepsilon:=\{(\alpha_\ve(s),\beta_\varepsilon(s)),\, s\in\R^+\} = \{(s^{-1}c_\ve(s) , c_\ve(s)),\, s\in \R^+\}
$$
where 
\begin{equation} \label{el_c_eps}
	c_\ve(s):=\inf_{(\xi_+,\xi_-)\in \mathcal{P}_2} \max\{s\lam_1(m_\ve,\xi_+),\lam_1(n_\ve,\xi_-)\}.
\end{equation}
In a similar way the limit curve  $\mathcal{C}_0$ associated to \eqref{pron1limite.gral} is given by 
$$
	\mathcal{C}_0:=\{(\alpha_0(s),\beta_0(s)),\, s\in\R^+\} = \{(s^{-1}c_0(s) , c_0(s)),\, s\in \R^+\}
$$
where 
\begin{equation} \label{el_c_0}
	c_0(s):=\inf_{(\xi_+,\xi_-) \in \mathcal{P}_2} \max\{s\lam_1(\bar m,\xi_+),\lam_1(\bar n,\xi_-)\}.
\end{equation}
Let $(\omega_+,\omega_-)\in \mathcal{P}_2$ be a partition such that
$$
	c_0(s)= \max\{s\lam_1(\bar m,\omega_+),\lam_1(\bar n,\omega_-)\}.
$$
By putting $(\omega_+,\omega_-)$ in \eqref{el_c_eps} it follows that
\begin{align} \label{eq.c.0}
	\begin{split}
		c_\ve(s)&\leq \max\{s\lam_1(m_\ve,\omega_+),\lam_1(n_\ve,\omega_-)\}.
	\end{split}
\end{align}
Now, Theorem \ref{teo_1} allows as to bound $\lam_1(m_\ve,\omega_+)$ and $\lam_1(n_\ve,\omega_-)$ in terms of $\lam_1(\bar m,\omega_+)$ and $\lam_1(\bar n,\omega_-)$, from where we bound \eqref{eq.c.0} as
\begin{align} \label{eq.c.1}
\begin{split}
		 \max\{&s\big(\lam_1(\bar m ,\omega_+)+ C_m \mu_1(\omega_+)^{\frac{1}{p}+1} \varepsilon	
		\big),\lam_1(\bar n,\omega_-) + C_n \mu_1(\omega_-)^{\frac{1}{p}+1} \varepsilon \}\leq \\
		&\leq  \max\{s\lam_1(\bar m ,\omega_+) ,\lam_1(\bar n,\omega_-)\}+C_1\ve  \max\{ s\mu_1(\omega_+)^{\frac{1}{p}+1},\mu_1(\omega_-)^{\frac{1}{p}+1}   \}   \\
		&=  c_0(s) +C_1\ve  \max\{ s\mu_1(\omega_+)^{\frac{1}{p}+1},\mu_1(\omega_-)^{\frac{1}{p}+1}   \}   		
\end{split}		
\end{align}
where $C_1=\max\{C_m,C_n\}$.

In the another hand, by using Lemma \ref{lema.3} we obtain that
\begin{align} \label{eq.c.11}
\begin{split}
	\max\{ &s\mu_1(\omega_+)^{ \frac{1}{p}+1},\mu_1(\omega_-)^{\frac{1}{p}+1} \}\leq \\
	& \leq 	
\left(\frac{\theta_+}{\theta_-}\right)^{1+\frac{1}{p}} \max\{s (\mu_2(\omega_+)\gamma(s))^{\frac{1}{p}+1},(s\mu_2(\omega_-)\gamma(s))^{\frac{1}{p}+1} \}\\	
	&\leq C_2 \max\{s \gamma(s)^{\frac{1}{p}+1},(s\gamma(s))^{\frac{1}{p}+1} \}\\
	&\leq C_2  \gamma(s)^{1+\frac{1}{p}} s \max\{1,s^\frac{1}{p}\},
\end{split}		
\end{align}
where $C_2=\left(\frac{\theta_+}{\theta_-}\mu_2(\Omega)\right)^{1+\frac{1}{p}}$.

Collecting \eqref{eq.c.0}--\eqref{eq.c.11} we obtain that
\begin{align} \label{eq.c.2}
	\begin{split}
		c_\ve(s)\leq c_0(s)+ C \ve \gamma(s)^{1+\frac{1}{p}}s\max\{1,s^\frac{1}{p}\},
	\end{split}
\end{align}
where $C=C_1C_2$.
Interchanging the roles of $c_\ve(s)$ and $c_0(s)$ we similarly obtain that
\begin{align} \label{eq.c.21}
	c_0(s)\leq c_\ve(s)+ C \ve \gamma(s)^{1+\frac{1}{p}}s\max\{1,s^\frac{1}{p}\},
\end{align}
where $C$ is the same constant that in \eqref{eq.c.2}.

Mixing up \eqref{eq.c.2} and \eqref{eq.c.21} it follows that
$$
	|c_\ve(s)-c_0(s)|\leq    C \ve \gamma(s)^{1+\frac{1}{p}}s\max\{1,s^\frac{1}{p}\}.
$$

Now, from Theorem \ref{teocurva} we get 
\begin{align*}
	|\beta_\ve(s)-\beta_0(s)|&=|c_\ve(s)-c_0(s)|\leq
	C \ve \gamma(s)^{1+\frac{1}{p}}s\max\{1,s^\frac{1}{p}\},\\
	|\alpha_\ve(s)-\alpha_0(s)|&=s^{-1}|c_\ve(s)-c_0(s)|\leq
	C \ve \gamma(s)^{1+\frac{1}{p}}\max\{1,s^\frac{1}{p}\}
\end{align*}
as it was required.
\end{proof}

\section{The one-dimensional Fu{\v{c}}{\'{\i}}k problem} \label{sec.1d}
In this section we state some properties related with the following one-dimensional asymmetric equation in $\Omega=(a,b)$
\begin{align} \label{P10.1d}
	\bigg\{
	\begin{array}{ll}
	-\Delta_p u=\alpha m(u^+ )^{p-1} - \beta n(u^- )^{p-1} &\quad \textrm{ in } \Omega\\
	u(a)=u(b)=0.
	\end{array}
\end{align}
As it was pointed in the introduction,  Rynne \cite{RYN} shown that its spectrum is given by
$$
	\Sigma(m,n):= \bigcup_{k\in \N} \mathcal{C}_{k},
$$
where the curves $\C_k=\C_k^+\cup \C_k^-$, $k\in \N_0$ are composed of pairs $(\alpha,\beta)\in\R^2$ whose corresponding eigenfunctions have $k$ internal zeros and positive (resp. negative) slote at $x=a$. In particular, $\C_0^+=\lambda_1(m)\times \R$ and $\C_0^-=\R\times \lam_1(n)$ have eigenfunctions which do not change signs in $\Omega$, being $\lam_1(r)$ the first eigenvalue of
\begin{align} \label{lap.1d}
	\bigg\{
	\begin{array}{ll}
	-\Delta_p u=\lam r|u|^{p-2}u &\quad \textrm{ in } \Omega\\
	u(a)=u(b)=0.
	\end{array}.
\end{align}
For simplicity, when the $r=1$ we denote $\mu_k$ the $k-$th eigenvalue of \eqref{lap.1d}.

Sometimes, in order to empathize the dependence on the domain we write $\lam_k(r,\Omega)$ and $\mu_k(\Omega)$ to denote the $k-$th eigenvalues of \eqref{lap.1d}.

Observe that, in contrast with the higher dimensional case,  eigenvalues of the Dirichlet $p-$laplacian can be explicitly computed as 
$$
	\mu_k(I)=\pi_p^p k^p |\Omega|^{-p}
 $$
where $\pi_p=2(p-1)^{1/p}\int_0^1(1-s^p)^{-1/p} \, ds$, see \cite{dPDM}.

Moreover, the sequence of variational eigenvalues of \eqref{lap.1d} can be described as
\begin{equation}\label{variac.1d}
	\lam_k = \inf_{U \in T_k}  \sup_{u \in C} \frac{\int_a^b |u'|^p\, dx}{\int_a^b r(x)|u|^p\, dx}
\end{equation}
where
\begin{align*}
	T_k & = \{ U \subset W^{1,p}_0(\Omega) \  : \  U \  \mbox{ is compact,} \ U=-U, \
	\gamma(U) \ge k \},
\end{align*}
and $\gamma$ is the Krasnoselskii genus, see \cite{GAP} for details.

From \eqref{variac.1d} it follows that
\begin{equation} \label{eig.comp}
	\theta_+^{-1}\mu_k \leq \lam_k \leq \theta_-^{-1} \mu_k
\end{equation}
for any $k\geq 1$.

The paper  \cite{terra} characterizes the curves of $\Sigma(m,n)$ in terms of the first eigenvalue of weighted $p-$laplacian problems (see Theorem 1.3 and Remark 2.2). The description of the curves is made as follows. A couple $(\alpha,\beta)$ belonging to  $\mathcal{C}_{1}$ has eigenfunctions with an internal zero, i.e., it has two nodal domains. Such couple can be written as $(s^{-1}c_{2}(s),c_{2}(s))$, where
$$
	c_{2}(s)=\inf \max\{s \lam_1(m,I_1),\lam_2(n,I_2)\}
$$
and $s$ is the slope of the line $\ell_s$ passing through the origin such that $(\alpha,\beta)=\mathcal{C}_{1} \cap \ell_s$. The infumum is taken over all the partitions $\mathcal{P}_2$ of $\Omega$ such that $a=t_0<t_1<t_2=b$, and $I_1=t_1-t_0$, $I_2=t_2-t_1$.

Now, a couple belonging to $\mathcal{C}_{2}$  has associated eigenfunctions with three nodal domains. Such pair can be characterized as $(s^{-1}c_{3}(s),c_{3}(s))$, where
$$
	c_{3}(s)=\inf \max\{s \lam_1(m,I_1),\lam_2(n,I_2), s\lam_1(m,I_3)\}
$$
and $s$ is the slope of the line $\ell_s$ passing through the origin such that $(\alpha,\beta)=\mathcal{C}_{2} \cap \ell_s$. Here the infumum is taken over all the partition $\mathcal{P}_3$ of $\Omega$ such that $a=t_0<t_1<t_2<t_3=b$, with $I_1=t_1-t_0$, $I_2=t_2-t_1$ and  $I_3=t_3-t_2$.

In order to state the general case we introduce the following notation: for $k\geq 0$ we denote 
  $$\mathcal{P}_{k+1}\, : \{\,a=t_0<t_1<\ldots<t_{k+1}=b\}$$ 
a partition of $\Omega=(a,b)$, and we write $I_{i+1}=t_{i+1}-t_i$ for $0\leq i \leq k$.

\medskip
Although the result in \cite{terra} was proved for the case $p=2$ of \eqref{P10.1d} and with constant weights, as the authors comment, by mixing Theorem 1.3 and Remark 2.2 from \cite{terra} it is straightforward to obtain the following result concerning to the spectrum of the weighted equation \eqref{P10.1d} for any $p>2$.

\begin{thm} \label{teo_part_1d}
Given $k\geq 1$ let us define
\begin{align} \label{elcn1}
\begin{split}
	c_{k+1}^+(s)=\inf_{\mathcal{P}_{k+1}} \max_{0\leq i\leq k} \{s \lam_1(m,I_{2i+1}), \lam_1(n,I_{2i+2})\},\\
	c_{k+1}^-(s)=\inf_{\mathcal{P}_{k+1}} \max_{0\leq i\leq k} \{s \lam_1(m,I_{2i+2}), \lam_1(n,I_{2i+1})\}
\end{split}
\end{align}
for all $s>0$. Then the pair $(s^{-1}c_{k+1}^\pm(s),c_{k+1}^\pm,(s))$ belongs to a curve $\mathcal{C}_{k}^\pm$.

Moreover,  the infima above are attained for suitable optimal partitions $P^\pm\in\mathcal{P}_{k+1}$. Furthermore, there are  eigenfunctions $u^\pm\in W^{1,p}_0(\Omega)$ of \eqref{P10.1d} associated to $(\alpha=s^{-1}c_{k+1}^\pm(s),\beta=c_{k+1}^\pm,(s))$ whose nodal domains are given by $P^\pm$.
\end{thm}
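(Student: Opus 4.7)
The strategy is to adapt the argument of \cite{terra} (Theorem~1.3 and Remark~2.2) from the case $p=2$ with constant weights to the present setting of general $p$ with variable weights $m,n$. I would treat the $(+)$ case; the $(-)$ case is symmetric after relabelling parities.

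The starting observation is that any weak solution $u$ of \eqref{P10.1d} with $k$ internal zeros and $u'(a)>0$ has $k+1$ nodal intervals of alternating sign. Restricted to each such interval $I_j$, the function $u|_{I_j}\in W^{1,p}_0(I_j)$ has constant sign, and testing \eqref{P10.1d} against $u|_{I_j}$ shows that $u|_{I_j}$ is, up to sign, a first Dirichlet eigenfunction of the weighted $p$-laplacian on $I_j$, with eigenvalue $\alpha=\lam_1(m,I_{2i+1})$ on the positive pieces (weight $m$) and $\beta=\lam_1(n,I_{2i+2})$ on the negative ones (weight $n$). This already gives the ``$\leq$'' direction: if $(\alpha,\beta)\in\C_k^+$ has slope $s$, the nodal partition $P$ is admissible in \eqref{elcn1} and every quantity in the max over $P$ equals $\beta=s\alpha$, so $c_{k+1}^+(s)\leq\beta$.

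Next I would prove attainment and the matching lower bound. The map
\begin{equation*}
(t_1,\dots,t_k)\longmapsto \max_{0\leq i\leq k}\{s\lam_1(m,I_{2i+1}),\lam_1(n,I_{2i+2})\}
\end{equation*}
is continuous on the open simplex $\{a<t_1<\cdots<t_k<b\}$, and by \eqref{eig.comp} together with $\mu_1(I)=\pi_p^p|I|^{-p}$ it diverges whenever any $|I_j|\to 0$; hence sub-level sets are compactly contained and an interior minimizer $P^{\ast}$ exists. At $P^{\ast}$ an equalization argument forces all values in the max to coincide: if one were strictly below the max, strict monotonicity of $\lam_1(r,\cdot)$ in interval length (via \eqref{variac.1d} and extension by zero) would allow shifting a single breakpoint to enlarge an interval realizing the max at the expense of an adjacent one with strictly smaller eigenvalue, strictly decreasing the max and contradicting optimality. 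Writing the common value as $\beta^\ast=c_{k+1}^+(s)$ and $\alpha^\ast=s^{-1}\beta^\ast$, I would then glue first eigenfunctions of the local weighted problems on each $I^{\ast}_j$, with alternating signs and arbitrary positive rescalings (allowed by the $p$-homogeneity of the equation), producing $u^+\in W^{1,p}_0(\Omega)$ with $k$ internal zeros and positive slope at $a$ that weakly solves \eqref{P10.1d} at $(\alpha^\ast,\beta^\ast)$; combined with the upper bound this pair must lie in $\C_k^+$.

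The main obstacle will be the equalization step: it rests on (i) strict monotonicity of $\lam_1(r,\cdot)$ under strict inclusion of 1D intervals and (ii) a combinatorial check that shifting a single interior breakpoint strictly lowers the max whenever that max is not simultaneously attained on both adjacent intervals. A secondary delicate point is verifying the global weak formulation for the glued $u^+$: because consecutive pieces meet exactly at zeros of $u^+$, the local equations can be summed against any test function without boundary contributions, but this passage from piecewise to global weak solution must be made explicit.
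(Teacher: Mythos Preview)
The paper does not give a proof of this theorem; it states the result as a direct adaptation of Theorem~1.3 and Remark~2.2 of \cite{terra} (proved there for $p=2$ with constant weights) and defers entirely to that reference. Your sketch is precisely that adaptation and follows the correct architecture: nodal decomposition of an eigenfunction on $\C_k^+$ gives the upper bound $c_{k+1}^+(s)\le\beta$; coercivity via $\mu_1(I)=\pi_p^p|I|^{-p}$ and \eqref{eig.comp} yields an interior minimiser; equalization at the minimiser; and gluing of local first eigenfunctions produces a Fu\v c\'\i k eigenfunction with the required nodal pattern.

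One point needs correction. In the gluing step you write that ``because consecutive pieces meet exactly at zeros of $u^+$, the local equations can be summed against any test function without boundary contributions.'' That is not the mechanism: integrating by parts on each $I_j$ leaves at every interior node $t_j$ the term
\[
\bigl(|u'(t_j^-)|^{p-2}u'(t_j^-)-|u'(t_j^+)|^{p-2}u'(t_j^+)\bigr)\,v(t_j),
\]
and $v(t_j)$ is in general nonzero. What makes this vanish is that the rescalings of the local eigenfunctions must be \emph{chosen} so that the one-sided derivatives match at each node; the $(p{-}1)$-homogeneity of the equation gives exactly the one degree of freedom per piece needed to do this recursively from left to right. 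Replace ``arbitrary positive rescalings'' by ``suitably chosen rescalings.'' On the equalization step, a \emph{single} breakpoint shift is not enough when several intervals realise the maximum: a maximal run of active intervals surrounded by active neighbours is unaffected by moving one endpoint. You must perturb all breakpoints bordering the active set simultaneously, enlarging every maximal active run at the expense of its inactive neighbours; this strictly lowers the maximum and yields the contradiction with optimality.
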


From the definition of $c_{k+1}(s)$ it is easy to check that $s_2>s_1$ implies  $c_{k+1}(s_2)>c_{k+1}(s_1)$. Moreover, it can be proved that $s_2^{-1}c_{k+1}(s_2)<s_1^{-1}c_{k+1}(s_1)$, from where the monotonicity of  $\mathcal{C}_{k+1}$ follows:

\begin{lema} [Theorem 21, \cite{RYN}]
The curve $\mathcal{C}_{k+1}$ is decreasing in the sense that if the points $(\alpha(s_1),\beta(s_1))$ and $(\alpha(s_2),\beta(s_2))$ belong to $\mathcal{C}_{k+1}$ then 
$$
	\alpha(s_1)> \alpha(s_2) \qquad \mbox{ and } \qquad \beta(s_2)>\beta(s_1)
$$
whenever $s_2>s_1$.
\end{lema}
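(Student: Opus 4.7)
The plan is to derive the two inequalities directly from the variational representation provided by Theorem \ref{teo_part_1d}. Writing the curve as $(\alpha(s),\beta(s))=(s^{-1}c_{k+1}(s),c_{k+1}(s))$ and setting
$$
F(s,P):=\max_{0\leq i\leq k}\{s\lambda_1(m,I_{2i+1}),\lambda_1(n,I_{2i+2})\},\qquad c_{k+1}(s)=\inf_{P\in\mathcal{P}_{k+1}}F(s,P),
$$
the monotonicity of the curve reduces to the two claims (a) $s\mapsto c_{k+1}(s)$ is strictly increasing, and (b) $s\mapsto s^{-1}c_{k+1}(s)$ is strictly decreasing. Claim (a) gives $\beta(s_1)<\beta(s_2)$ and (b) gives $\alpha(s_1)>\alpha(s_2)$.

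For the non-strict part of (a), I would simply observe that for every partition $P$ the function $s\mapsto F(s,P)$ is non-decreasing, being the maximum of the constants $\lambda_1(n,I_{2i+2})$ and the increasing linear terms $s\lambda_1(m,I_{2i+1})$; taking the infimum over $P$ preserves monotonicity. For the non-strict part of (b), I would rewrite
$$
s^{-1}F(s,P)=\max_{0\leq i\leq k}\{\lambda_1(m,I_{2i+1}),s^{-1}\lambda_1(n,I_{2i+2})\},
$$
which is non-increasing in $s$, and again take the infimum over $P$.

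To upgrade both to strict inequalities, I would use the existence of optimal partitions granted by Theorem \ref{teo_part_1d}. For (a), let $P_2$ be optimal for $s_2$. A perturbation argument shows that at the optimum at least one ``$m$-term'' $s_2\lambda_1(m,I^{2}_{2i+1})$ attains the max $c_{k+1}(s_2)$: otherwise the $n$-intervals could be slightly enlarged (the $n$-terms strictly decrease as their intervals grow, since $\lambda_1(n,\cdot)$ is strictly decreasing in the interval length) until some $m$-term becomes active, contradicting $c_{k+1}(s_2)=F(s_2,P_2)$. Using that same $P_2$ for the slope $s_1<s_2$, every $m$-term strictly decreases from $s_2\lambda_1(m,I^{2}_{2i+1})$ to $s_1\lambda_1(m,I^{2}_{2i+1})$, while the $n$-terms remain $\leq c_{k+1}(s_2)$; by the same perturbation (shrinking $n$-intervals slightly to redistribute) one obtains a partition $\tilde P$ with $F(s_1,\tilde P)<c_{k+1}(s_2)$, hence $c_{k+1}(s_1)<c_{k+1}(s_2)$. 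For (b), the symmetric argument applies: at an optimal partition $P_1$ for $s_1$ some $n$-term is active, and at $s_2>s_1$ the rescaled $n$-terms $s_2^{-1}\lambda_1(n,I^{1}_{2i+2})$ strictly decrease.

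The main obstacle I expect is the ``active term'' step, namely justifying rigorously that at an optimal partition both types of terms saturate the maximum. The continuous dependence $I\mapsto \lambda_1(\cdot,I)$ together with the strict monotonicity of $\lambda_1$ with respect to the interval length (a standard fact for the weighted one-dimensional $p$-Laplacian, since shorter intervals force larger eigenvalues) makes a one-parameter perturbation of two adjacent breakpoints $t_j,t_{j+1}$ feasible; this perturbation simultaneously decreases one term and increases the adjacent one, and hence one can locally reduce $F(s,\cdot)$ unless the maximum is attained on terms of both $m$- and $n$-type. Once this equilibrium property of optimal partitions is in place, the strict inequalities in (a) and (b) follow by the comparison with the rescaled partition described above, yielding the desired monotonicity of $\mathcal{C}_{k+1}$.
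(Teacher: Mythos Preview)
The paper does not actually prove this lemma: it is stated with a citation to Rynne \cite{RYN} and preceded only by the remark that ``from the definition of $c_{k+1}(s)$ it is easy to check that $s_2>s_1$ implies $c_{k+1}(s_2)>c_{k+1}(s_1)$'' and that ``it can be proved that $s_2^{-1}c_{k+1}(s_2)<s_1^{-1}c_{k+1}(s_1)$''. Your reduction to the strict monotonicity of $s\mapsto c_{k+1}(s)$ and the strict antimonotonicity of $s\mapsto s^{-1}c_{k+1}(s)$ is therefore exactly the outline the paper hints at; what you add is a self-contained justification of the \emph{strict} inequalities via a perturbation of optimal partitions, which the paper leaves to the cited reference. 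Note that Rynne's original argument proceeds by ODE/shooting techniques rather than by the optimal-partition description, so your route is genuinely different from the primary source and has the advantage of staying entirely within the variational framework of Theorem~\ref{teo_part_1d}.

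Your argument is essentially correct, but one step is mis-stated. After fixing the optimal partition $P_2$ for $s_2$ and passing to $s_1<s_2$, you write that one should obtain $\tilde P$ by ``shrinking $n$-intervals slightly''. Shrinking an $n$-interval increases $\lambda_1(n,\cdot)$, which goes the wrong way. What you need (and what your own earlier sentence about enlarging $n$-intervals already says) is to \emph{enlarge} the $n$-intervals whose term equals $c_{k+1}(s_2)$ at the expense of the adjacent $m$-intervals; since at slope $s_1$ every $m$-term is strictly below $c_{k+1}(s_2)$, a small such perturbation yields $F(s_1,\tilde P)<c_{k+1}(s_2)$. With this correction, and with the standard strict domain monotonicity of $\lambda_1(r,I)$ for the one-dimensional weighted $p$-Laplacian, your proof goes through.
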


The following inequality relates $c_{k}^\pm (1)$ with the $k-$th eigenvalue of the Dirichlet $p-$laplacian.

\begin{lema} \label{lema.4.3}
	Let $k\geq 1$ and  $c_{k+1}^\pm(\cdot )$ given in \eqref{elcn1}. It holds that
	$$
		c_{k+1}^\pm (1) \leq    \theta_-^{-1} \mu_{k+1}(\Omega).
	$$
\end{lema}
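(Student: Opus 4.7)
The plan is to upper bound $c_{k+1}^\pm(1)$ by evaluating the infimum in \eqref{elcn1} at a single well-chosen partition, namely the uniform one.

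First I would specialize to $s=1$ in \eqref{elcn1}, so that
\[
 c_{k+1}^+(1)=\inf_{\mathcal{P}_{k+1}} \max_{0\leq i\leq k}\{\lambda_1(m,I_{2i+1}),\lambda_1(n,I_{2i+2})\},
\]
and analogously for $c_{k+1}^-(1)$. The roles of $m$ and $n$ are symmetric, so the argument for $c_{k+1}^-(1)$ is identical. Next I would apply the two-sided comparison \eqref{eig.comp} (or equivalently the variational inequality underlying it) at the level of each subinterval: for any interval $I\subset \Omega$ and any weight $r$ satisfying \eqref{cotas},
\[
 \lambda_1(r,I)\leq \theta_-^{-1}\mu_1(I).
\]
Plugging this into the max above yields
\[
 c_{k+1}^+(1) \leq \theta_-^{-1}\inf_{\mathcal{P}_{k+1}} \max_{1\leq j\leq k+1}\mu_1(I_j).
\]

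The second step is to exploit the explicit one-dimensional formula $\mu_1(I)=\pi_p^p\,|I|^{-p}$ recalled in this section. Then $\mu_1(I_j)$ is a decreasing function of $|I_j|$, so the natural candidate partition is the uniform one: choose $t_j=a+j\,(b-a)/(k+1)$ for $0\le j\le k+1$, so that $|I_j|=(b-a)/(k+1)$ for every $j$. For this particular partition every term in the maximum equals $\pi_p^p (k+1)^p/(b-a)^p=\mu_{k+1}(\Omega)$, using again the explicit formula $\mu_{k+1}(\Omega)=\pi_p^p(k+1)^p|\Omega|^{-p}$. Since the infimum is bounded above by the value at this particular choice, we obtain
\[
 c_{k+1}^+(1)\leq \theta_-^{-1}\mu_{k+1}(\Omega),
\]
which is the claim; the same computation with $m$ and $n$ swapped gives the bound for $c_{k+1}^-(1)$.

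There is no real obstacle here: the whole proof is a one-line test-partition argument, the only thing to be careful about is that the uniform partition is \emph{admissible} (it is, since it is an element of $\mathcal{P}_{k+1}$), and that the elementary bound $\lambda_1(r,I)\leq \theta_-^{-1}\mu_1(I)$ follows directly from \eqref{cotas} via the Rayleigh quotient \eqref{variac}. No minimax or rearrangement argument is required because one does not need optimality of the uniform partition, only its admissibility.
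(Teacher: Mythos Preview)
Your proof is correct and follows essentially the same approach as the paper: bound each $\lambda_1(r,I_j)$ by $\theta_-^{-1}\mu_1(I_j)$ using \eqref{cotas}, then test the resulting infimum with the uniform partition and use $\mu_1(I_j)=\pi_p^p(k+1)^p|\Omega|^{-p}=\mu_{k+1}(\Omega)$.
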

\begin{proof}
	By using \eqref{cotas} and \eqref{eig.comp} we have that
	\begin{align} \label{eg1}
	\begin{split}
		c^+_{k+1}(1)&\leq \inf_{\mathcal{P}_{k+1}} \max_i \{ \lam_1(\theta_-,I_{2i+1}), \lam_1(\theta_-,I_{2i+2}) \}\\
		&\leq \theta_-^{-1}\inf_{\mathcal{P}_{k+1}} \max_i \{ \mu_1(I_{i}) \}
		\end{split}
	\end{align}
	In particular, if we take an uniform partition of $\Omega$, i.e., $|I_i|=|\Omega|/(k+1)$, it follows that $\mu_1(I_i)= \pi_p^p|I_i|^{-p}=\mu_{k+1}(\Omega)$ for each $0\leq i \leq k$ and  the result follows.
\end{proof}

 As a consequence of Lemma \ref{lema.4.3}, we obtain upper bounds for $\alpha(s)$ and $\beta(s)$. The following result is a one-dimensional version of Lemma \ref{lema.2} for every curve in the spectrum of \eqref{P10.1d}.
\begin{lema} \label{lema.21d}
	Let   $(\alpha(s),\beta(s))\in \mathcal C_k(m,n)$. For each $s>0$ it holds that
	$$
		\alpha(s) \leq \theta_-^{-1} \mu_{k+1}(\Omega) \gamma(s), \qquad \beta(s) \leq \theta_-^{-1} \mu_{k+1}(\Omega) s \gamma(s)
	$$
	with $\gamma$ defined by
	\begin{equation} \label{gama.1}
		\gamma(s)=
		\bigg\{
		\begin{array}{ll}
		1  &\mbox{ if }\ s \geq 1 \\[0.05 cm]
		s^{-1}  &\mbox{ if }\ s \leq 1.
		\end{array}
	\end{equation}
\end{lema}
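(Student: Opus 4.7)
The plan is to extend the argument used for Lemma \ref{lema.4.3} from the single slope $s=1$ to arbitrary $s>0$, by exploiting the optimal partition characterization of Theorem \ref{teo_part_1d} and simply testing with the uniform partition. Since any $(\alpha(s),\beta(s))\in\mathcal{C}_k(m,n)$ is of the form $(s^{-1}c_{k+1}^\pm(s),c_{k+1}^\pm(s))$ for the appropriate branch, it is enough to prove the single inequality
\[
c_{k+1}^\pm(s) \leq s\,\gamma(s)\,\theta_-^{-1}\mu_{k+1}(\Omega)\qquad \text{for all } s>0,
\]
since the bound on $\beta(s)=c_{k+1}^\pm(s)$ is exactly this, and the bound on $\alpha(s)=s^{-1}c_{k+1}^\pm(s)$ follows by dividing by $s$.

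To establish this single inequality, I would plug the uniform partition $\mathcal{P}^{u}_{k+1}$ determined by $|I_i|=|\Omega|/(k+1)$ for all $0\leq i\leq k$ into the definition \eqref{elcn1}. For this partition the explicit one-dimensional formula recorded in the paper gives $\mu_1(I_i)=\pi_p^p|I_i|^{-p}=\mu_{k+1}(\Omega)$ for each $i$, and the weighted comparison \eqref{eig.comp} applied on each $I_i$ yields $\lam_1(m,I_i),\lam_1(n,I_i)\leq \theta_-^{-1}\mu_{k+1}(\Omega)$. Taking the maximum appearing in \eqref{elcn1} over all the indices therefore leads to
\[
c_{k+1}^\pm(s) \leq \max\{s,1\}\,\theta_-^{-1}\mu_{k+1}(\Omega).
\]

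The final step is the elementary observation that $\max\{s,1\}=s\gamma(s)$ with $\gamma$ defined by \eqref{gama.1}: indeed, for $s\geq 1$ both sides equal $s$, and for $s\leq 1$ both sides equal $1$. Substituting this identity and then dividing by $s$ produces exactly the two stated inequalities for $\beta(s)$ and $\alpha(s)$.

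There is no real obstacle; the argument is essentially the same as the proof of Lemma \ref{lema.4.3}, with the factor $s$ tracked carefully through the $\max$. The only subtlety worth noting is that the bound is uniform in the sign of the branch $\pm$, because the uniform partition treats all subintervals symmetrically and both $m$ and $n$ satisfy \eqref{cotas} with the same constants $\theta_\pm$, so the same computation applies to $c_{k+1}^+(s)$ and $c_{k+1}^-(s)$ without modification.
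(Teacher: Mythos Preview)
Your argument is correct. It differs from the paper's proof in one respect worth noting: the paper does not plug the uniform partition into \eqref{elcn1} for general $s$; instead it invokes the monotonicity of the curve $\mathcal{C}_k$ (Rynne's Theorem~21) to reduce to the case $s=1$---using $\alpha(s)\leq\alpha(1)$ for $s\geq 1$ and $\beta(s)\leq\beta(1)$ for $s\leq 1$---and then applies Lemma~\ref{lema.4.3} at $s=1$. Your route is slightly more direct and self-contained, since it bypasses the monotonicity lemma entirely by carrying the factor $s$ through the $\max$ and observing $\max\{s,1\}=s\gamma(s)$; the paper's route, on the other hand, makes the geometric content (the curve is decreasing) explicit. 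Both give exactly the same bound with the same constant.
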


\begin{proof}
	Let $s>0$ and $(\alpha(s),\beta(s))\in \mathcal{C}_{k}$. From Theorem \ref{teo_part_1d} we can write $\alpha(s)=s^{-1}c_{k+1}(s)$ and $\beta(s)=c_{k+1}(s)$ (here $c_k$ denotes any of $c_k^\pm$). We empathize that $\mathcal{C}_{k}(m,n)$ is an decreasing curve.

	When $s\geq 1$, by using Lemma \ref{lema.4.3} we can bound
	\begin{align} \label{rell1}
		\alpha(s)\leq \alpha(1)=c_{k+1}(1) \leq \theta_-^{-p} \mu_{k+1}(\Omega).
	\end{align}
	When $s\leq 1$ we have that $\beta(s)\leq \beta(1)$, from where $
		s^{-1}\beta(s)\leq s^{-1}\beta(1)$.
	Since $\beta(s)=s\alpha(s)$,  we conclude that
	\begin{align} \label{rell3}
		\alpha(s)&=s^{-1}\beta(s) \leq s^{-1}\alpha(1) =s^{-1} c_{k+1}(1) \leq s^{-1} \theta_-^{-p}\mu_{k+1}(\Omega).
	\end{align}
	By using \eqref{rell1} and \eqref{rell3} together with the relation $\beta=s\alpha$ the conclusion of the lemma follows.
\end{proof}

Finally, the following lemma allow us to estimate eigenvalues of the $p-$laplacian on nodal domains corresponding to eigenfunctions of \eqref{P10.1d}.
\begin{lema} \label{1d.lema.3}
	Let $(\alpha(s),\beta(s))\in \mathcal{C}_k(m,n)$ with associated eigenfunction $u$. Let $I_+$ (resp. $I_-$) be a nodal domain of $u$ in which $u>0$ (resp. $u<0$).
	 Then
	$$
		\mu_1(I_{+})\leq C\gamma(s) , \qquad \mu_1(I_{-})\leq C s\gamma(s)
	$$
	where $C=\frac{\theta_+}{\theta_-}\mu_{k+1}(\Omega)$ and $\gamma(s)$ is given in \eqref{gama.1}.
\end{lema}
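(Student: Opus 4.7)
The plan is to mirror the argument used for Lemma \ref{lema.3} in the one-dimensional setting, now relying on Lemma \ref{lema.21d} in place of Lemma \ref{lema.2} to supply the pointwise bounds on $\alpha(s)$ and $\beta(s)$. The one-dimensional structure is actually helpful here: each nodal component of $u$ is an open interval $I_+=(t_i,t_{i+1})$ with $u(t_i)=u(t_{i+1})=0$, so the restriction $u|_{I_+}$ automatically belongs to $W^{1,p}_0(I_+)$, and the argument is if anything cleaner than in the higher-dimensional case.

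First I would test the weak formulation of \eqref{P10.1d} against $v=u^+$. Since $u^+$ vanishes outside the positive nodal components and $\nabla u$ there coincides with $\nabla u^+$, the identity reduces on a single positive nodal interval $I_+$ to
$$
\int_{I_+} |u'|^p\,dx \;=\; \alpha(s)\int_{I_+} m(x)\,|u|^p\,dx.
$$
Since $u|_{I_+}\in W^{1,p}_0(I_+)$ is sign-definite, this forces $\alpha(s)=\lam_1(m,I_+)$, and $u|_{I_+}$ is an associated first eigenfunction.

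Next I would compare $\lam_1(m,I_+)$ with the unweighted quantity $\mu_1(I_+)$. Substituting the pointwise bounds $\theta_-\le m\le \theta_+$ into the Rayleigh quotient \eqref{variac} yields
$$
\theta_+^{-1}\mu_1(I_+)\;\le\;\lam_1(m,I_+)\;\le\;\theta_-^{-1}\mu_1(I_+),
$$
so that $\mu_1(I_+)\le \theta_+\alpha(s)$. Inserting the estimate $\alpha(s)\le \theta_-^{-1}\mu_{k+1}(\Omega)\gamma(s)$ coming from Lemma \ref{lema.21d} yields the first inequality with the claimed constant $C=\tfrac{\theta_+}{\theta_-}\mu_{k+1}(\Omega)$. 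The second inequality follows by the identical procedure, this time testing against $u^-$ on a negative nodal interval $I_-$: one obtains $\beta(s)=\lam_1(n,I_-)$, hence $\mu_1(I_-)\le \theta_+\beta(s)$, and Lemma \ref{lema.21d} finishes the job via $\beta(s)\le \theta_-^{-1}\mu_{k+1}(\Omega)\,s\,\gamma(s)$.

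No serious obstacle is expected; the argument is essentially algebraic once the identification $\alpha(s)=\lam_1(m,I_+)$ and its analogue for $\beta(s)$ are in place. The only minor subtlety is checking that $u^\pm \in W^{1,p}_0(\Omega)$ (standard, since $u\in W^{1,p}_0(\Omega)$) and that $u|_{I_\pm}$ lies in $W^{1,p}_0(I_\pm)$, which in one dimension is immediate because nodal domains are open intervals bounded by zeros of the continuous representative of $u$.
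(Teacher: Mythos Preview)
Your proposal is correct and matches the paper's own proof essentially line for line: the paper simply says ``arguing in the same way that in the proof of Lemma~\ref{lema.3}'' to obtain $\mu_1(I_+)\le \theta_+\alpha(s)$ and $\mu_1(I_-)\le \theta_+\beta(s)$, and then invokes Lemma~\ref{lema.21d}. Your write-up spells out precisely those steps.
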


\begin{proof}
	By arguing in the same way that in the proof of Lemma \ref{lema.3} it is obtained that
	$$
		\mu_1(I_+) \leq \theta_+    \alpha(s), \qquad
		\mu_1(I_-) \leq \theta_+  \beta(s).
	$$
	The  result now follows by applying Lemma \ref{lema.21d}.
\end{proof}

\section{proof of the result in the one-dimensional case} \label{sect1d.p}

Aimed at proving our main result for one-dimensional Fu{\v{c}}{\'{\i}}k spectrum, first we introduce the following notation we will use along this section. As we have pointed in the introduction, given the bounded interval $\Omega=(a,b)\subset \R$ and a function $r$ satisfying \eqref{cotas}, we denote $\lam_{k}(r_\ve,\Omega)$ the $k-$th eigenvalue of 
\begin{align} \label{Plap.sec}
	\begin{cases}
	-\Delta_p u_\ve= \lam_\ve r_\ve |u_\ve|^{p-2}u_\ve  &\quad \textrm{in } \Omega\\
	u_\ve(a)=u_\ve(b)=0.
	\end{cases}
\end{align}
In the case in which $r\equiv 1$ we just put $\mu_k(\Omega)$.

 Observe that, since  $\C_{k,\ve} \to \C_{k,0}$ (see Theorem 1, \cite{FBPS-fucik}) in the sense that
	\begin{equation} \label{ccon}
		\alpha_{k,\ve}(s)\to \alpha_{k,0}(s) \quad \mbox{and} \quad \beta_{k,\varepsilon}(s)\to \beta_{k,0}(s)	
	\end{equation}
	where, for $s\in \R^+$, $$
		(\alpha_\ve(s),\beta_\ve(s))\in \mathcal{C}_{k\,\ve}(m_\ve,n_\ve) \quad \mbox{ and } \quad  (\alpha_0(s),\beta_0(s))\in \mathcal{C}_{k,0}(\bar m,\bar n)
	$$
	 eigenfunctions corresponding to $(\alpha_0(s),\beta_0(s))$ have  exactly $k$ nodal domains on $\Omega$.

\medskip 
	
	With the previous remarks and lemmas stated in Section \ref{sec.1d} we are ready to prove the rates of the convergences \eqref{ccon}.

\begin{proof}[Proof of Theorem \ref{teo-1d}]
We consider the curve $\mathcal{C}_{k,\ve}^+$. An eigenfunction corresponding to a pair over this curve has  positive slope at $x=a$, therefore it is positive over odd nodal domains and negative over even nodal domains. The treatment for $\mathcal{C}_{k,\ve}^-$ is analogous.

Let $s>0$. According to Theorem \ref{teo_part_1d} a pair $(\alpha_\ve(s),\beta_\varepsilon(s))\in \mathcal{C}_{k,\ve}^+$ can be written as
$$
	(\alpha_\ve(s),\beta_\varepsilon(s)) = (s^{-1}c_{k+1,\ve}(s) , c_{k+1,\ve}(s))
$$
where 
\begin{equation} \label{1d.el_c_eps}
	c_{k+1,\ve}(s)=\inf_{\mathcal{P}_{k+1}} \max_i \{s \lam_{1,\ve}(m_\ve,I_{2i+1}), \lam_{1,\ve}(n_\ve,I_{2i+2})\}
\end{equation}
and in a similar way, the limit pair $(\alpha_0(s),\beta_0(s))$ belonging to the limit curve  $\mathcal{C}_{k,0}^+$ can be written as
$$
	(\alpha_0(s),\beta_0(s)) = (s^{-1}c_{k+1,0}(s) , c_{k+1,0}(s))
$$
where 
\begin{equation} \label{1d.el_c_0}
	c_{k+1,0}(s)=\inf_{\mathcal{P}_{k+1}} \max_i \{s \lam_1(\bar m,I_{2i+1}), \lam_1(\bar n,I_{2i+2})\}.
\end{equation}

Let $P_{k+1}\in \mathcal{P}_{k+1}$ a partition where the infimum is  attained in \eqref{1d.el_c_0}. By considering $P_{k+1}$ in the expression \eqref{1d.el_c_eps} we get
\begin{align} \label{1d.eq.c.0}
	\begin{split}
		c_{k+1,\ve}(s)&\leq  \max_i \{s \lam_{1}(m_\ve,I_{2i+1}), \lam_{1}(n_\ve,I_{2i+2})\}.
	\end{split}
\end{align}
Now, using Theorem \ref{teo_1} we can bound $\lam_{1}(m_\ve,I_{2i+1})$ and $\lam_{1}(n_\ve,I_{2i+2})$ in term of $\lam_1(\bar m,I_{2i+1})$ and $\lam_1(\bar n,I_{2i+2})$, from where we find an upper bound of \eqref{1d.eq.c.0} as:
\begin{align} \label{1d.eq.c.1}
\begin{split}
		 &\max_i\{s\big(\lam_1(\bar m ,I_{2i+1})+ C_m \mu_1(I_{2i+1})^{\frac{1}{p}+1} \varepsilon	
		\big),\lam_1(\bar n,I_{2i+2}) + C_n \mu_1(I_{2i+2})^{\frac{1}{p}+1} \varepsilon \}\\
		&\leq  \max\{s\lam_1(\bar m ,I_{2i+1}) ,\lam_1(\bar n,I_{2i+2})\}+C\ve  \max\{ s\mu_1(I_{2i+1})^{\frac{1}{p}+1},\mu_1(I_{2i+2})^{\frac{1}{p}+1}   \}   \\
		&=  c_{k+1,0}(s) +C_1\ve  \max\{ s\mu_1(I_{2i+1})^{\frac{1}{p}+1},\mu_1(I_{2i+2})^{\frac{1}{p}+1}   \}   		
\end{split}		
\end{align}
where $C_1=\max\{C_m,C_n\}$.

On the other hand, by using Lemma \ref{1d.lema.3} we obtain that
\begin{align} \label{1d.eq.c.11}
\begin{split}
	\max\{s\mu_1&(I_{2i+1})^{\frac{1}{p}+1},\mu_1(I_{2i+2})^{\frac{1}{p}+1} \} \\
	 &\leq 	C_2 \max\{s (\mu_{k+1}(I)\gamma(s))^{\frac{1}{p}+1},(s\mu_{k+1}(I)\gamma(s))^{\frac{1}{p}+1} \}\\
	&\leq C_2  \mu_{k+1}(I)^{1+\frac{1}{p}}\gamma(s)^{1+\frac{1}{p}} s \max\{1,s^\frac{1}{p}\}.
\end{split}		
\end{align}
where $C_2=\left(\frac{\theta_+}{\theta_-}\right)^{\frac{1}{p}+1}$.

Collecting \eqref{1d.eq.c.0}--\eqref{1d.eq.c.11} we obtain that
\begin{align} \label{1d.eq.c.2}
	\begin{split}
		c_{k+1,\ve}(s)\leq c_{k+1,0}(s)+ C_1C_2 \ve \mu_{k+1}(I)^{1+\frac{1}{p}} \gamma(s)^{1+\frac{1}{p}}s\max\{1,s^\frac{1}{p}\}.
	\end{split}
\end{align}

Interchanging the roles of $c_{k+1,\ve}(s)$ and $c_{k+1,0}(s)$ we similarly obtain that
\begin{align} \label{1d.eq.c.21}
	c_{k+1,0}(s)\leq c_{k+1,\ve}(s)+ C_1C_2 \ve \mu_{k+1}(I)^{1+\frac{1}{p}} \gamma(s)^{1+\frac{1}{p}}s\max\{1,s^\frac{1}{p}\}.
\end{align}
Mixing up \eqref{1d.eq.c.2} and \eqref{1d.eq.c.21} it follows that
$$
	|c_\ve(s)-c_0(s)|\leq   C_1C_2 \ve \mu_{k+1}(I)^{1+\frac{1}{p}} \gamma(s)^{1+\frac{1}{p}}s\max\{1,s^\frac{1}{p}\}.
$$

Finally, since $\mu_k(I)=k^p \pi_p^p |I|^{-p}$, we get 
\begin{align*}
	|\beta_\ve(s)-\beta_0(s)|&=|c_{k+1,\ve}(s)-c_{k+1,0}(s)|\leq
	C \ve (k+1)^{p+1} \gamma(s)^{1+\frac{1}{p}}s\max\{1,s^\frac{1}{p}\},\\
	|\alpha_\ve(s)-\alpha_0(s)|&=s^{-1}|c_{k+1,\ve}(s)-c_{k+1,0}(s)|\leq
	C \ve (k+1)^{p+1} \gamma(s)^{1+\frac{1}{p}}\max\{1,s^\frac{1}{p}\}
\end{align*}
where $C=C_1C_2 \big(\frac{\pi_p}{b-a} \big)^{1+p}$, and the result is proved.

\end{proof}

\section{Acknowledgements} This paper was mostly written during a visit at Universit\'a degli Studi di Torino. The author wishes to thank to Prof. Susanna Terracini for her useful discussions about this topic and help along the stay.

\bibliographystyle{amsplain}
\bibliography{Biblio}

\end{document}